\newcommand{\R}{\mathbb{R}}
\newcommand{\N}{\mathbb{N}}
\renewcommand{\d}{\mathrm{d}}
\renewcommand{\div}{\mathrm{div}}
\newcommand{\mass}[1]{\mathbb M(#1)}
\newcommand{\flatnorm}[1]{\mathbb F(#1)}
\newcommand{\hd}{\mathcal{H}}
\newcommand{\restr}{{\mbox{\LARGE$\llcorner$}}}
\newcommand{\pushforward}[2]{{{#1}_{\#}#2}}
\newcommand{\meas}{{\mathcal M}}
\newcommand{\measp}{{\meas_+}}
\newcommand{\weakstarto}{\mathop{\stackrel\ast\rightharpoonup}}
\newcommand{\massfluxto}{\mathop{\stackrel{\mathrm f}\rightharpoonup}}
\newcommand{\flatto}{\mathop{\stackrel\flat\rightharpoonup}}
\newcommand{\flatChains}[1]{{\mathbf{F}_{#1}}}
\newcommand{\finiteFlatChains}[1]{{\overline{\mathbf{F}}_{#1}}}
\newcommand{\massFluxes}{{\mathcal F}}
\newcommand{\diff}{{\mathrm{diff}}}
\newcommand{\rec}{{\mathrm{rec}}}
\newcommand{\hMass}[2][h]{\mathbb M_{#1}(#2)}
\newcommand{\brTptCost}[2][h]{\mathbb J_{#1}(#2)}
\newcommand{\flux}{{\rho}}
\newcommand{\chain}{{T}}
\newtheorem{theorem}{Theorem}
\newtheorem{lemma}[theorem]{Lemma}
\newtheorem{corollary}[theorem]{Corollary}
\newtheorem{definition}[theorem]{Definition}
\newtheorem{remark}[theorem]{Remark}
\title{Approximation of rectifiable $1$-currents and weak-$\ast$ relaxation of the $h$-mass}
\author{Andrea Marchese \and Benedikt Wirth}
\date{}
\begin{document}

\maketitle

\begin{abstract}
Based on Smirnov's decomposition theorem
we prove that every rectifiable $1$-current $\chain$ with finite mass $\mass\chain$ and finite mass $\mass{\partial\chain}$ of its boundary $\partial\chain$ can be approximated in mass by a sequence of rectifiable $1$-currents $\chain_n$
with polyhedral boundary $\partial\chain_n$ and $\mass{\partial\chain_n}$ no larger than $\mass{\partial\chain}$. 
Using this result we can compute the relaxation of the $h$-mass for polyhedral $1$-currents with respect to the joint weak-$\ast$ convergence of currents and their boundaries.
We obtain that this relaxation coincides with the usual $h$-mass for normal currents.
This shows that the concepts of so-called generalized branched transport and the $h$-mass are equivalent.
\end{abstract}

\section{Introduction}
Variational models for ramified transportation networks have recently attracted lots of interest (see for instance \cite{Xi03,BeCaMo09,MaSo13,Sa15,BrWi18,CoDRMa18} and the references therein).
They are closely related to the measure-geometric concept of the $h$-mass of normal currents (as for instance introduced in \cite{Fl66}, where $h$ plays the role of a group metric).
The main difference is that the transportation network models are defined via relaxation with respect to weak-$\ast$ convergence of currents and their boundaries,
while the $h$-mass is defined via relaxation with respect to the weaker notion of flat convergence.
In \cite[Prop.\,2.32]{BrWi18} the equivalence between both models was used without proof.
In this note we prove the equivalence between generalized branched transport and the $h$-mass in full generality in \cref{thm:equivalence}.
The main tool will be a recent relaxation result by Chambolle, Ferrari, and Merlet \cite{ChFeMe18} for currents with polyhedral boundary,
combined with a new strong approximation result of rectifiable $1$-currents by currents with polyhedral boundary and equibounded boundary mass (\crefrange{thm:rectifiableBoundary}{thm:polyhedralBoundaryApprox}).
In the remainder of the introduction we describe the above-mentioned models and corresponding notions in more detail.

Following \cite{Xi03} or its generalization \cite{BrWi18}, the generalized branched transport model can be introduced as follows
(where our notation is chosen slightly differently to emphasize the correspondence to the $h$-mass later).
Throughout the article we consider $\Omega\subset\R^d$ to be the closure of an open bounded connected domain,
and we denote by $\meas(\Omega)$ the set of Radon measures, by $\measp(\Omega)\subset\meas(\Omega)$ the subset of nonnegative measures, and by $\meas(\Omega;\R^d)$ the set of $\R^d$-valued Radon measures on $\Omega$.
The total variation measure will be indicated by $|\cdot|$, the total variation of a measure by $\|\cdot\|_\meas$, and weak-$\ast$ convergence by $\weakstarto$.
The notation $\hd^m$ denotes the $m$-dimensional Hausdorff measure and $\restr$ the restriction of measures to Borel sets.
\begin{definition}[Generalized branched transport]
\begin{enumerate}
\item
A \emph{unit line flux} along $e$ is a measure $\flux\in\meas(\Omega;\R^d)$ of the form $\flux=\vec e\hd^1\restr e$,
where $e\subset\Omega$ is a straight line segment with unit tangent $\vec e$.
\item
A \emph{polyhedral flux} in $\Omega$ is a measure $\flux\in\meas(\Omega;\R^d)$ of the form $\flux=\sum_{i=1}^na_i\flux_i$ for $n\in\N$, $a_1,\ldots,a_n\in\R$, and $\flux_1,\ldots,\flux_n$ unit line fluxes.
\item
If the distributional divergence of $\flux\in\meas(\Omega;\R^d)$ is a Radon measure, then $\flux$ is called a \emph{mass flux}, and its negative divergence is called the \emph{boundary of $\flux$}, denoted by $\partial\flux=-\div\flux$.
The set of mass fluxes on $\Omega$ is denoted $\massFluxes(\Omega)$.
\item
Let $\flux_1,\flux_2,\ldots$ be a sequence of mass fluxes.
We say $\flux_n$ \emph{converges weakly} to mass flux $\flux$ and write $\flux_n\massfluxto\flux$ as $n\to\infty$,
if $\flux_n\weakstarto\flux$ and $\partial\flux_n\weakstarto\partial\flux$.
\item
A \emph{transportation cost} is a subadditive, nondecreasing, lower semi-continuous function $h:[0,\infty)\to[0,\infty)$ with $h(0)=0$.
\item
Given a transportation cost $h$, the corresponding \emph{generalized branched transport cost} of a polyhedral flux $\flux=\sum_{i=1}^na_i\vec e_i\hd^1\restr e_i$ with non-overlapping line segments $e_i$ is
\begin{equation*}
\brTptCost\flux=\sum_{i=1}^nh(|a_i|)\hd^1(e_i)\,.
\end{equation*}
The \emph{generalized branched transport cost} of a mass flux $\flux$ is
\begin{equation*}
\brTptCost\flux=\inf\left\{\liminf_{n\to\infty}\brTptCost{\flux_n}\,\middle|\,\flux_n\in\massFluxes(\Omega)\text{ polyhedral, }\flux_n\massfluxto\flux\text{ as }n\to\infty\right\}\,,
\end{equation*}
the relaxation of the generalized branched transport cost on polyhedral fluxes with respect to weak convergence of mass fluxes.
\end{enumerate}
\end{definition}
The variational problem of finding optimal mass transportation schemes between a given mass source $\mu_+\in\measp(\Omega)$ and a sink $\mu_-\in\measp(\Omega)$ then is
\begin{equation*}
\min\left\{\brTptCost\flux\,\middle|\,\flux\text{ is mass flux with }\partial\flux=\mu_--\mu_+\right\}\,.
\end{equation*}
The existence of minimizers and their properties are discussed in \cite{BrWi18}.
Note that mass fluxes are also known as \emph{divergence measure vector fields} \cite{Si07} or \emph{vector charges} \cite{Sm93} or \emph{$1$-dimensional normal currents} \cite{Fe69}.

The definition of the $h$-mass of a flat chain follows the same strategy.
\begin{definition}[$h$-mass of a flat chain]
\begin{enumerate}
\item
An \emph{$m$-dimensional polyhedron} in $\Omega$ is an oriented polyhedral subset of an $m$-dimensional plane $H\subset\Omega$ with nonempty relative interior.
\item
A \emph{polyhedral $m$-chain} in $\Omega$ is a linear combination $\chain=\sum_{i=1}^na_ie_i$
with $n\in\N$, $a_1,\ldots,a_n\in\R$, and $e_1,\ldots,e_n$ $m$-dimensional polyhedra in $\Omega$.
A refinement of $\chain$ is a polyhedral $m$-chain of the form $\sum_{i=1}^n\sum_{k=1}^{K_i}a_ie_i^k$, where $e_i=e_i^1\cup\ldots\cup e_i^{K_i}$ represents a disjoint partition of $e_i$.
Two polyhedral $m$-chains are \emph{equivalent} and identified with each other, if they have a joint refinement.
When writing a polyhedral $m$-chain as $\chain=\sum_{i=1}^na_ie_i$ we shall always tacitly assume the $e_i$ to be pairwise disjoint (which can always be achieved).
\item
The \emph{boundary} of a polyhedral $m$-chain $\chain=\sum_{i=1}^na_ie_i$ is the polyhedral $(m-1)$-chain $\partial\chain=\sum_{i=1}^na_i\partial e_i$,
where $\partial e_i$ is the sum of the oriented faces in the relative boundary of $e_i$.
\item
The \emph{mass} of a polyhedral $m$-chain $\chain=\sum_{i=1}^na_ie_i$ is $\mass\chain=\sum_{i=1}^n|a_i|\hd^m(e_i)$.
\item
The \emph{flat norm} of a polyhedral $m$-chain reads
\begin{equation*}
\flatnorm\chain=\inf\{\mass{\chain-\partial D}+\mass D\,|\,D\text{ is polyhedral }(m+1)\text{-chain}\}\,.
\end{equation*}
\item
The Banach space $\flatChains{m}(\Omega)$ of \emph{flat $m$-chains} is the completion of the vector space of polyhedral $m$-chains in $\Omega$ under the flat norm.
The linear boundary operator $\partial$ is extended continuously with respect to the flat norm onto all of $\flatChains{m}(\Omega)$.
The mass functional $\mass\cdot$ is extended onto $\flatChains{m}(\Omega)$ via relaxation with respect to the flat norm.
The subspace of flat $m$-chains of finite mass and with finite mass boundary is denoted $\finiteFlatChains{m}(\Omega)\subset\flatChains{m}(\Omega)$.
\item
Let $\chain_1,\chain_2,\ldots$ be a sequence of flat $m$-chains.
We say $\chain_n$ \emph{converges in mass} to the flat $m$-chain $\chain$ and write $\chain_n\to\chain$, if $\mass{\chain_n-\chain}\to0$.
We say $\chain_n$ \emph{converges flatly} to $\chain$ and write $\chain_n\flatto\chain$, if $\flatnorm{\chain_n-\chain}\to0$.
\item
Given a transportation cost $h$, the corresponding \emph{$h$-mass} of a polyhedral $1$-chain $\chain=\sum_{i=1}^na_ie_i$ is
\begin{equation*}
\hMass\chain=\sum_{i=1}^nh(|a_i|)\hd^1(e_i)\,.
\end{equation*}
The \emph{$h$-mass} of a flat $1$-chain $\chain$ is
\begin{equation*}
\hMass\chain=\inf\left\{\liminf_{n\to\infty}\hMass{\chain_n}\,\middle|\,\chain_n\in\flatChains1(\Omega)\text{ polyhedral, }\chain_n\flatto\chain\text{ as }n\to\infty\right\}\,,
\end{equation*}
the relaxation of the $h$-mass on polyhedral fluxes with respect to flat convergence.
\end{enumerate}
\end{definition}
Note that flat $m$-chains with finite mass and boundary mass are also known as normal $m$-currents \cite[4.1.23 \& 4.2.23]{Fe69},\cite[Rem.\,2.29(2)]{BrWi18}.

The following remark details how mass fluxes and flat $1$-chains relate to each other.
In particular, flat convergence of flat $1$-chains is a strictly weaker notion than weak convergence of mass fluxes,
which is why in general the $h$-mass must be less than or equal to the generalized branched transport cost.

\begin{remark}[Flat $1$-chains and mass fluxes]
\begin{enumerate}
\item
Polyhedral fluxes $\flux=\sum_{i=1}^na_i\vec e_i\hd^1\restr e_i$ and polyhedral $1$-chains $\chain=\sum_{i=1}^na_ie_i$ can naturally be identified with each other.
Analogously, there is an obvious natural identification between polyhedral $0$-chains and finite discrete measures on $\Omega$.
\item
The identification between polyhedral $0$-chains and discrete measures can be extended to an isomorphism $\iota_0:\finiteFlatChains0(\Omega)\to\meas(\Omega)$.
Likewise, the identification between polyhedral 1-chains and polyhedral fluxes can be extended to an isomorphism $\iota_1:\finiteFlatChains1(\Omega)\to\massFluxes(\Omega)$.

The isomorphisms are consistent with the notions of boundary and convergence in the following sense.
\begin{enumerate}
\item
For any $\chain\in\finiteFlatChains1(\Omega)$ and $\flux\in\massFluxes(\Omega)$ with $\iota_1(\chain)=\flux$ we have $\iota_0(\partial\chain)=\partial\flux$.
\item
Let $\mu,\mu_1,\mu_2,\ldots\in\meas(\Omega)$ and $\flux,\flux_1,\flux_2,\ldots\in\massFluxes(\Omega)$, then
\begin{align*}
&\mu_n\mathop\to_{n\to\infty}\mu\text{ strongly in }\meas(\Omega)&\hspace*{-1.5ex}\text{is equivalent to}\qquad&\iota_0^{-1}(\mu_n)\mathop\to_{n\to\infty}\iota_0^{-1}(\mu)\text{ in mass, and}\\
&\flux_n\mathop\to_{n\to\infty}\flux\text{ strongly in }\massFluxes(\Omega)&\hspace*{-1.5ex}\text{is equivalent to}\qquad&\iota_1^{-1}(\flux_n)\mathop\to_{n\to\infty}\iota_1^{-1}(\flux)\text{ in mass.}
\end{align*}
\item
Let $\flux\in\massFluxes(\Omega)$ and $\flux_1,\flux_2,\ldots$ be a sequence in $\massFluxes(\Omega)$, then
\begin{equation*}
\flux_n\massfluxto_{n\to\infty}\flux
\quad\text{implies}\quad
\iota_1^{-1}(\flux_n)\flatto_{n\to\infty}\iota_1^{-1}(\flux)\,.
\end{equation*}
Vice versa, let $\chain_1,\chain_2,\ldots\in\finiteFlatChains1(\Omega)$ have equibounded mass and boundary mass and let $\chain\in\flatChains1(\Omega)$, then
\begin{equation*}
\chain_n\flatto_{n\to\infty}\chain
\quad\text{implies}\quad
\iota_1(\chain_n)\massfluxto_{n\to\infty}\iota_1(\chain)\,.
\end{equation*}
Analogously, $\mu_n\weakstarto\mu$ in $\meas(\Omega)$ as $n\to\infty$ implies $\iota_0^{-1}(\mu_n)\flatto\iota_0^{-1}(\mu)$,
while $\chain_n\flatto\chain$ in $\finiteFlatChains0(\Omega)$ as $n\to\infty$ implies $\iota_0(\chain_n)\weakstarto\iota_0(\chain)$ under the condition that the flat $0$-chains $\chain_n$ have equibounded mass.
\end{enumerate}
The proof of the above essentially relies on weak-$\ast$ compactness of measures with bounded mass and classical deformation theorems such as \cite{Wh99b};
for more details see the brief summary in \cite[Rem.\,2.29]{BrWi18} and the references therein.
\item
Example sequences of flat (polyhedral) $1$-chains in $[-1,1]^2$ that converge flatly to $0$, while the corresponding mass fluxes do not converge weakly, are
\begin{align*}
\chain_n&=\sum_{k=-n}^{n-1}\left[\left(\tfrac kn,0\right),\left(\tfrac kn+\tfrac1{n^2},0\right)\right]
\quad\text{and}\\
S_n&=n[c_1,c_2]+n[c_2,c_3]+n[c_3,c_4]+n[c_4,c_1]\\
&\quad-n[\alpha c_1,\alpha c_2]-n[\alpha c_2,\alpha c_3]-n[\alpha c_3,\alpha c_4]-n[\alpha c_4,\alpha c_1]
\quad\text{with }\alpha=1-\tfrac1{n^2}\,,
\end{align*}
where $[a,b]$ denotes the line segment from $a$ to $b$ and $c_1,\ldots,c_4$ denote the four corners of $[-1,1]^2$.
While $\iota_1(\chain_n)\to0$ strongly in $\meas([-1,1]^2;\R^2)$, $\partial(\iota_1(\chain_n))$ diverges in $\meas([-1,1]^2)$.
On the other hand, $\partial(\iota_1(S_n))=0$ for all $n$, but $\iota_1(S_n)$ diverges in $\meas([-1,1]^2;\R^2)$.
\end{enumerate}
\end{remark}

\begin{corollary}[Bound of $h$-mass by branched transport cost]
Let $\chain\in\finiteFlatChains1(\Omega)$ and $h$ be a transportation cost, then $\hMass\chain\leq\brTptCost{\iota_1(\chain)}$.
\end{corollary}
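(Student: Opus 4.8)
The plan is to show that every competitor in the infimum defining $\brTptCost{\iota_1(\chain)}$ yields a competitor in the infimum defining $\hMass\chain$ with the same cost, so that the latter infimum is taken over a set at least as rich as the former and hence is no larger.

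First, recall from the Remark (item 1) that polyhedral fluxes $\flux=\sum_{i=1}^na_i\vec e_i\hd^1\restr e_i$ and polyhedral $1$-chains $\chain=\sum_{i=1}^na_ie_i$ are identified with each other under $\iota_1$; in particular $\iota_1$ sends polyhedral $1$-chains to polyhedral mass fluxes and back. Next observe that the two cost functionals agree on polyhedral objects: for a polyhedral flux written with non-overlapping segments one has, directly from the definitions, $\brTptCost\flux=\sum_{i=1}^nh(|a_i|)\hd^1(e_i)=\hMass{\iota_1^{-1}(\flux)}$. (A minor point is that a polyhedral flux and its associated polyhedral $1$-chain admit compatible non-overlapping representations; this follows from the disjointness conventions adopted for polyhedral chains, passing to a common refinement if needed.)

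Now let $\flux_n\in\massFluxes(\Omega)$ be any sequence of polyhedral mass fluxes with $\flux_n\massfluxto\iota_1(\chain)$, and set $\chain_n=\iota_1^{-1}(\flux_n)$, a polyhedral $1$-chain. By the Remark (item 2(c)), $\flux_n\massfluxto\iota_1(\chain)$ implies $\chain_n\flatto\iota_1^{-1}(\iota_1(\chain))=\chain$, so $(\chain_n)_n$ is admissible in the relaxation defining $\hMass\chain$. Since $\hMass{\chain_n}=\brTptCost{\flux_n}$ for each $n$, we get $\liminf_{n\to\infty}\hMass{\chain_n}=\liminf_{n\to\infty}\brTptCost{\flux_n}$, and taking the infimum over all admissible sequences $(\flux_n)_n$ gives $\hMass\chain\leq\brTptCost{\iota_1(\chain)}$.

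The argument is essentially bookkeeping and no step is a genuine obstacle; the only thing requiring care is checking that $\iota_1$ really does map polyhedral objects to polyhedral objects and that the two cost expressions match term by term, which is the content of the first two paragraphs. Note that the implication $\flux_n\massfluxto\flux\Rightarrow\iota_1^{-1}(\flux_n)\flatto\iota_1^{-1}(\flux)$ holds without any equiboundedness hypothesis, so one does not even need to restrict to sequences of bounded cost (and if $\brTptCost{\iota_1(\chain)}=\infty$ there is nothing to prove).
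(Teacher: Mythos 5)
Correct, and essentially the argument the paper implicitly relies on: the paper states this Corollary without proof, immediately after the Remark observes that weak convergence of mass fluxes implies flat convergence (so the infimum in $\hMass\chain$ is taken over at least as many admissible sequences as that in $\brTptCost{\iota_1(\chain)}$) and that the two cost functionals coincide on polyhedral objects. Your proposal just spells out this standard ``relaxation in a weaker topology gives a smaller functional'' reasoning, including the correct observation that the implication in Remark 2(c) from mass-flux convergence to flat convergence needs no equiboundedness hypothesis.
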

In this note we show equality.
\begin{theorem}[Equivalence of $h$-mass and branched transport cost]\label{thm:equivalence}
Let $\chain\in\finiteFlatChains1(\Omega)$ and $h$ be a transportation cost, then $\hMass\chain=\brTptCost{\iota_1(\chain)}$.
\end{theorem}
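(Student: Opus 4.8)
I would begin by noting that one inequality, $\hMass\chain\le\brTptCost{\iota_1(\chain)}$, is already the Corollary above (weak convergence of mass fluxes implies flat convergence, so the infimum defining $\hMass\chain$ runs over a larger class of polyhedral approximating sequences than the one defining $\brTptCost{\iota_1(\chain)}$), so the task is to prove $\brTptCost{\iota_1(\chain)}\le\hMass\chain$, and I may assume $\hMass\chain<\infty$. By the very definition of $\brTptCost\cdot$ it then suffices to construct polyhedral $1$-chains $\chain_n$ with $\iota_1(\chain_n)\massfluxto\iota_1(\chain)$ and $\limsup_{n\to\infty}\hMass{\chain_n}\le\hMass\chain$. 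Before doing so I would reduce to the case that $\chain$ is rectifiable: this is automatic when $h$ grows superlinearly at the origin (i.e.\ $\liminf_{a\to0^+}h(a)/a=+\infty$), since finiteness of the $h$-mass then forces rectifiability, cf.\ \cite{CoDRMa18}, and otherwise one disposes of the diffuse part of $\chain$ by a separate approximation, both functionals being additive over the rectifiable/diffuse splitting. So from now on $\chain$ is rectifiable with $\mass\chain,\mass{\partial\chain}<\infty$.

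The first main step is to apply the approximation result \crefrange{thm:rectifiableBoundary}{thm:polyhedralBoundaryApprox}: it yields rectifiable $1$-currents $\chain^{(k)}$ with polyhedral boundary $\partial\chain^{(k)}$, with $\mass{\partial\chain^{(k)}}\le\mass{\partial\chain}$, and with $\chain^{(k)}\to\chain$ in mass. Since then $\mass{\chain^{(k)}}\le\mass\chain+\mass{\chain^{(k)}-\chain}$ is equibounded, the $\chain^{(k)}$ lie in a fixed bounded subset of $\finiteFlatChains1(\Omega)$ (equibounded mass and boundary mass); moreover $\flatnorm{\chain^{(k)}-\chain}\le\mass{\chain^{(k)}-\chain}\to0$, so by the Remark $\iota_1(\chain^{(k)})\massfluxto\iota_1(\chain)$. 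Finally $\chain^{(k)}$ differs from $\chain$ only by a correction localized near $\partial\chain$ which is small in mass and — this is where some care is needed — also in $h$-mass, so by subadditivity of the $h$-mass functional $\limsup_{k\to\infty}\hMass{\chain^{(k)}}\le\hMass\chain$.

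The second main step is to polyhedralize each $\chain^{(k)}$. Since $\chain^{(k)}$ has polyhedral boundary, the relaxation result of Chambolle, Ferrari and Merlet \cite{ChFeMe18} provides polyhedral $1$-chains $\chain^{(k)}_j$ with $\partial\chain^{(k)}_j=\partial\chain^{(k)}$, with equibounded mass (bounded in terms of $\mass{\chain^{(k)}}$, hence uniformly), with $\chain^{(k)}_j\flatto\chain^{(k)}$ as $j\to\infty$, and with $\hMass{\chain^{(k)}_j}\to\hMass{\chain^{(k)}}$. All the $\chain^{(k)}_j$ therefore lie in one fixed bounded subset of $\finiteFlatChains1(\Omega)$, and using only the triangle inequality for the flat norm I would extract a diagonal sequence $\chain_n:=\chain^{(n)}_{j(n)}$ with $\flatnorm{\chain_n-\chain^{(n)}}\to0$ and $\hMass{\chain_n}\le\hMass{\chain^{(n)}}+\tfrac1n$; then $\flatnorm{\chain_n-\chain}\to0$ with the $\chain_n$ polyhedral of equibounded mass and boundary mass, so the Remark gives $\iota_1(\chain_n)\massfluxto\iota_1(\chain)$, while $\limsup_{n\to\infty}\hMass{\chain_n}\le\limsup_{k\to\infty}\hMass{\chain^{(k)}}\le\hMass\chain$. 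This is the desired sequence, and $\brTptCost{\iota_1(\chain)}\le\hMass\chain$ follows.

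The hard part will be the approximation result \crefrange{thm:rectifiableBoundary}{thm:polyhedralBoundaryApprox} itself, and this is where Smirnov's decomposition theorem has to enter: from the decomposition of the rectifiable current $\chain$ into curves one has to move the (a priori diffuse) boundary measure onto a polyhedral set, keeping its mass from increasing and at the same time controlling the $h$-mass of the resulting modification. Once that result and the Chambolle--Ferrari--Merlet relaxation for currents with polyhedral boundary are in hand, the remaining pieces — the reduction to rectifiable currents, the convergence $\hMass{\chain^{(k)}}\to\hMass\chain$, and the final diagonal argument — should be comparatively routine.
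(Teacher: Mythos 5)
Your reduction to the rectifiable case contains the main gap. When $h'(0)<\infty$, finiteness of $\hMass\chain$ does \emph{not} force $\chain$ to be rectifiable, and you propose to handle this by splitting $\chain=\chain^\rec+\chain^\diff$, ``both functionals being additive over the rectifiable/diffuse splitting.'' For $\hMass\cdot$ this additivity is exactly the representation theorem of \cite{BrWi18} (restated as \cref{thm:representation}), so it is available but not free. For $\brTptCost\cdot$, however, you only have subadditivity for free, so what you actually need is
\begin{equation*}
\brTptCost{\iota_1(\chain^\diff)}\leq h'(0)\,\mass{\chain^\diff}\,,
\end{equation*}
i.e.\ the theorem itself for a diffuse chain. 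Proving this requires approximating $\chain^\diff$ by \emph{polyhedral} fluxes with simultaneously small multiplicities, converging mass, \emph{and} weak-$\ast$ converging boundaries; that is not a ``separate approximation'' one can wave at, it is a deformation-type argument of its own. The paper avoids this entirely: for $h'(0)<\infty$ it never reduces to rectifiable $\chain$. Instead it takes a polyhedral sequence $\tilde\chain_n\flatto\chain$ with $\hMass{\tilde\chain_n}\to\hMass\chain$ (equibounded mass from $h(m)\geq\alpha m$), notes that the $0$-chain $\iota_0^{-1}(\mu_-^n-\mu_+^n)-\partial\tilde\chain_n$ is a polyhedral null sequence in flat norm, picks polyhedral fillings $D_n\to0$ in mass, and controls the $h$-mass correction by the elementary inequality $\hMass{D_n}\leq h'(0)\mass{D_n}$. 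This bypasses the rectifiable/diffuse splitting completely and is the content you are missing.

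A second, more minor gap: the Chambolle--Ferrari--Merlet relaxation you invoke in the second step is proved under the lower bound $h(m)\geq\alpha m$ (this is also how one gets equibounded masses from bounded $h$-masses there). A general transportation cost, e.g.\ $h(m)=\sqrt m$, does not satisfy this. The paper closes this by approximating $h$ from above by costs $h_M$ that do satisfy the lower bound (\cref{thm:h_M}), proving the identity for each $h_M$, and then letting $M\to\infty$ using \cref{thm:representation} to control $\hMass[h_M]\chain-\hMass\chain$. Your proposal does not mention this reduction, so as written the argument only covers costs with a linear lower bound. Apart from these two points --- the non-rectifiable case with $h'(0)<\infty$ and the reduction to $h(m)\geq\alpha m$ --- your structure for the rectifiable case with $h'(0)=\infty$ (approximation lemmas, then Chambolle--Ferrari--Merlet, then a flat-norm diagonal/liminf argument) matches the paper's.
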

The proof will be provided in \cref{sec:relaxation}.
It will be based on the following lemmas for $1$-rectifiable flat chains, whose statement requires the notion of rectifiability and acyclicity introduced below.
\begin{definition}[Rectifiable and acyclic mass fluxes and flat chains]
\begin{enumerate}
\item
Let $m\in\{0,1\}$. Given a Borel set $A\subset\Omega$ and a flat $m$-chain $\chain\in\finiteFlatChains{m}(\Omega)$ with $\iota_m(\chain)\restr A\in\massFluxes(\Omega)$,
the \emph{restriction of $\chain$ to $A$} is defined as $\chain\restr A=\iota_m^{-1}(\iota_m(\chain)\restr A)$.
The restriction to $A$ can be extended to all flat $m$-chains of finite mass by continuity with respect to flat convergence.
\item
A subset $\Sigma\subset\R^d$ is called \emph{$m$-rectifiable} if it is contained in the countable union of $m$-dimensional $C^1$-submanifolds, up to an $\hd^m$-negligible set.
\item
A (vector-valued) Radon measure $\flux$ or a flat $m$-chain $\chain$ are called \emph{$m$-rectifiable}
if there exists an $m$-rectifiable set $\Sigma\subset\R^d$ with $\flux=\flux\restr\Sigma$ or $\chain=\chain\restr\Sigma$, respectively.
\item
A mass flux $\flux\in\massFluxes(\Omega)$ is called \emph{acyclic}
if it cannot be decomposed into $\flux=\flux^a+\flux^b$ with $\flux^b\neq0$, $\partial\flux^b=0$, and $\|\flux\|_\meas=\|\flux^a\|_\meas+\|\flux^b\|_\meas$.
A flat $1$-chain $\chain\in\finiteFlatChains1(\Omega)$ is \emph{acyclic} if $\iota_1(\chain)$ is.
\end{enumerate}
\end{definition}
Note that the restriction for flat chains can also be defined without reference to mass fluxes as in \cite{Fl66}.
\begin{lemma}[Approximation of rectifiable mass fluxes by mass fluxes with rectifiable boundary]\label{thm:rectifiableBoundary}
Let $\flux\in\massFluxes(\Omega)$ be $1$-rectifiable and acyclic,
then there exists a monotonically increasing sequence of $\flux$-measurable functions $\lambda_1,\lambda_2,\ldots:\Omega\to[0,1]$
and associated $1$-rectifiable acyclic mass fluxes $\flux_1=\lambda_1\flux,\flux_2=\lambda_2\flux,\ldots$
with $\flux_n\to\flux$ strongly as $n\to\infty$,
where $\partial\flux_n$ is $0$-rectifiable with $\|\partial\flux_n\|_\meas\leq\|\partial\flux\|_\meas$ for all $n\in\N$.
\end{lemma}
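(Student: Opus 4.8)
The plan is to reduce $\flux$ to its rectifiable form, invoke Smirnov's decomposition theorem to obtain a \emph{coarea formula for the boundary operator}, and then build $\flux_n$ by thresholding the density of $\flux$.

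Since $\flux\in\massFluxes(\Omega)$ is $1$-rectifiable, the mass flux property forces it into the form $\flux=\theta\,\vec\tau\,\hd^1\restr\Sigma$ with $\Sigma\subset\Omega$ a $1$-rectifiable set, $\vec\tau$ a unit tangent field along $\Sigma$, and density $0\le\theta\in L^1(\hd^1\restr\Sigma)$. Applying Smirnov's decomposition to the acyclic flux $\flux$ would produce a finite positive measure $\mu$ on the space of injective arc-length parametrized Lipschitz curves contained in $\Sigma$ with $\flux=\int R_\gamma\,\d\mu(\gamma)$, $\mass\flux=\int\mass{R_\gamma}\,\d\mu$ and $\mass{\partial\flux}=\int\mass{\partial R_\gamma}\,\d\mu$; acyclicity makes every curve run along $\vec\tau$ up to $\hd^1$-null sets, so that $\theta(x)=\mu(\{\gamma:x\in\gamma\})$ for $\hd^1$-a.e.\ $x$. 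Writing $S_t:=\vec\tau\,\hd^1\restr(\Sigma\cap\{\theta>t\})$ one gets the layer-cake representation $\flux=\int_0^\infty S_t\,\d t$, and each $S_t$ has unit multiplicity and mass $\le t^{-1}\mass\flux$. The central claim I would establish is the \emph{coarea identity}
\begin{equation}\label{eq:coarea}
\mass{\partial\flux}=\int_0^\infty\mass{\partial S_t}\,\d t\,;
\end{equation}
in particular $\mass{\partial S_t}$ is finite for a.e.\ $t$, and for each such \emph{admissible} $t$ the current $S_t$ is an integral normal $1$-current, so $\partial S_t$ is an integral $0$-current of finite mass, i.e.\ a finite sum of Dirac masses with integer weights, which is $0$-rectifiable. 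Moreover $t\mapsto\mass{\partial S_t}$ is integer-valued and, at admissible $t$, lower semicontinuous (from $\mass$-lower semicontinuity of $\partial$ along $S_s\to S_t$).

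For the construction I would pick, for each $n$, a finite atomic measure $\nu_n=\sum_k\beta_k^n\delta_{c_k^n}$ on $(0,\infty)$, supported on admissible levels, with $\nu_n([0,s])\le s$ for all $s$, chosen nested ($\nu_n([0,s])\le\nu_{n+1}([0,s])$), exhausting ($\nu_n([0,s])\nearrow s$ as $n\to\infty$), and \emph{adapted} to $\mass{\partial S_\cdot}$ in the sense that $\int_0^\infty\mass{\partial S_t}\,\d\nu_n(t)\le\mass{\partial\flux}$; this is arranged by spreading the atoms of $\nu_n$ away from those levels where $\mass{\partial S_\cdot}$ is large, which by \eqref{eq:coarea} and Chebyshev form a set of small measure (a residual error, if present, is absorbed by replacing $\lambda_n$ by $(1-\delta_n)\lambda_n$ with $\delta_n\searrow0$, which preserves all other properties). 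Setting $\tilde\theta_n:=\nu_n([0,\theta])=\sum_{k:c_k^n<\theta}\beta_k^n$, $\lambda_n:=\tilde\theta_n/\theta$ (and $0$ where $\theta=0$), and $\flux_n:=\lambda_n\flux=\tilde\theta_n\,\vec\tau\,\hd^1\restr\Sigma=\sum_k\beta_k^n S_{c_k^n}$, the verification is then routine: the $\lambda_n$ are $\flux$-measurable, take values in $[0,1]$ by $\nu_n([0,s])\le s$, are monotonically increasing by nestedness, and converge to $1$ $\flux$-a.e.\ by exhaustion; $\mass{\flux-\flux_n}=\int_\Sigma(\theta-\tilde\theta_n)\,\d\hd^1\to0$ by monotone convergence, so $\flux_n\to\flux$ strongly; $\partial\flux_n=\sum_k\beta_k^n\partial S_{c_k^n}$ is a finite atomic measure, hence $0$-rectifiable, with $\mass{\partial\flux_n}\le\sum_k\beta_k^n\mass{\partial S_{c_k^n}}=\int\mass{\partial S_t}\,\d\nu_n\le\mass{\partial\flux}$; and $\flux_n$ is $1$-rectifiable (carried by $\Sigma$) and acyclic, since in any decomposition $\flux_n=\flux_n^a+\flux_n^b$ with $\partial\flux_n^b=0$ and additive masses the absence of cancellation forces $\flux_n^b$ to be carried by $\Sigma$ in the direction $\vec\tau$ with scalar density $\le\tilde\theta_n\le\theta$, so that $\flux=\flux_n^b+(\flux-\flux_n^b)$ exhibits $\flux_n^b$ as a cycle in an additive decomposition of $\flux$, whence $\flux_n^b=0$.

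The hard part will be, first, the coarea identity \eqref{eq:coarea}. I would prove it by reducing $\Sigma$, up to an $\hd^1$-null set, to countably many pieces each contained in a $C^1$ arc parametrized by arc length $\eta\colon I\to\R^d$; on such a piece the interior part of $\partial\flux$ is the pushforward under $\eta$ of (a sign times) the distributional derivative $D(\theta\circ\eta)$ of the $\BV$ function $\theta\circ\eta$, while the interior part of $\partial S_t$ is correspondingly $\eta_\#D\mathbf 1_{\{\theta\circ\eta>t\}}$, and the one-dimensional coarea formula $|D(\theta\circ\eta)|=\int_0^\infty|D\mathbf 1_{\{\theta\circ\eta>t\}}|\,\d t$ combined with the observation that $D\mathbf 1_{\{\theta\circ\eta>t\}}$ has, at each point, a sign not depending on $t$ (so that $\mass{\int_0^\infty\partial S_t\,\d t}=\int_0^\infty\mass{\partial S_t}\,\d t$ with no cancellation) yields \eqref{eq:coarea} for the interior parts; the contributions at the (countably many) junction points $p$, where $\partial S_t(\{p\})$ equals a $t$-independent vector times the indicator $\mathbf 1_{\{\theta(p)>t\}}$, are treated by the same sign argument, the only delicate point being the bookkeeping where level sets $\{\theta=t\}$ meet the junction set. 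The second difficulty is producing weights $\nu_n$ that are simultaneously nested, exhausting, admissible-supported and adapted; here I would exploit the integrability of $\mass{\partial S_\cdot}$ from \eqref{eq:coarea} together with its lower semicontinuity and integer-valuedness to build the $\nu_n$ iteratively, inserting new atoms only at admissible levels where $\mass{\partial S_\cdot}$ is locally as small as possible.
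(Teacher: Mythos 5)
The proposal hinges on the coarea identity \eqref{eq:coarea}, and this identity is false for acyclic rectifiable mass fluxes. The flaw is precisely in the claim that the atomic contributions to $\partial S_t$ have a $t$-independent sign at each point, so that no cancellation occurs when integrating in $t$. That sign argument is fine on the interior of a single arc (one-dimensional BV coarea), but it breaks down at junctions of $\Sigma$, where the atom of $\partial S_t$ at a junction $p$ is the \emph{net} in/out flux of those incident branches with density $>t$, and this can flip sign as $t$ crosses the densities of the branches. Concretely, take $\Sigma$ to be a tripod, with segments $A$ from $a$ to $p$, $B$ from $b$ to $p$, and $C$ from $p$ to $c$, and let $\flux$ have density $1$ on $A$ and $B$ and density $2$ on $C$. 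This $\flux$ is acyclic, $1$-rectifiable, with $\partial\flux=2\delta_c-\delta_a-\delta_b$, so $\mass{\partial\flux}=4$ (the contributions at $p$ cancel). Yet $S_t=\vec\tau\hd^1\restr(A\cup B\cup C)$ for $t\in[0,1)$, with $\partial S_t=\delta_p+\delta_c-\delta_a-\delta_b$ of mass $4$, and $S_t=\vec\tau\hd^1\restr C$ for $t\in[1,2)$, with $\partial S_t=\delta_c-\delta_p$ of mass $2$; thus $\int_0^\infty\mass{\partial S_t}\,\d t=6>4=\mass{\partial\flux}$. The atom $\partial S_t(\{p\})$ equals $+1$ for $t\in[0,1)$ and $-1$ for $t\in[1,2)$, not the claimed $t$-independent vector times $\mathbf 1_{\{\theta(p)>t\}}$.

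This gap is not merely technical: any sequence $\nu_n$ that is both exhausting ($\nu_n([0,s])\nearrow s$) and supported on admissible levels must satisfy $\liminf_n\int\mass{\partial S_t}\,\d\nu_n\geq\int_0^\infty\mass{\partial S_t}\,\d t$ (by lower semicontinuity of $\mass{\partial S_\cdot}$), and whenever the right-hand side exceeds $\mass{\partial\flux}$ the adaptedness condition must eventually fail, making the estimate $\mass{\partial\flux_n}\leq\sum_k\beta_k^n\mass{\partial S_{c_k^n}}\leq\mass{\partial\flux}$ unavailable. In fact the thresholding construction itself can violate $\mass{\partial\flux_n}\leq\mass{\partial\flux}$: take a diamond graph with edges $a\to p$, $q\to b$ of density $3$ and two parallel edges $p\to q$ of densities $2$ and $1$; then $\mass{\partial\flux}=6$, but choosing $\nu_n=2.5\,\delta_{2.5}$ (which satisfies $\nu_n([0,s])\leq s$) gives $\flux_n$ supported on the density-$3$ edges only with boundary mass $10>6$. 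Thresholding by density breaks the cancellations that make $\partial\flux$ small, and there is no level-wise bookkeeping that repairs this. The paper takes an entirely different route: rather than slicing $\theta$ by level, it uses Smirnov's decomposition directly and prunes each individual Smirnov curve to the arc between its first and last intersection with a generically shifted rectilinear grid. For a good grid shift (existence furnished by \cref{thm:grid,thm:gridCurve}), these intersections form a countable subset of $\Sigma$, so $\partial\flux_n$ is automatically $0$-rectifiable, and the pruning reduces each curve's boundary mass individually, so the global bound $\mass{\partial\flux_n}\leq\mass{\partial\flux}$ follows from the Smirnov mass identities with no cancellation issues.
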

\begin{lemma}[Approximation of mass fluxes by fluxes with finite discrete boundary]\label{thm:polyhedralBoundary}
Let $\flux\in\massFluxes(\Omega)$ be acyclic with $0$-rectifiable boundary $\partial\flux$,
then there exists a monotonically increasing sequence of $\flux$-measurable functions $\kappa_1,\kappa_2,\ldots:\Omega\to[0,1]$
and associated acyclic mass fluxes $\flux_1=\kappa_1\flux,\flux_2=\kappa_2\flux,\ldots$
with $\flux_n\to\flux$ strongly as $n\to\infty$,
where $\partial\flux_n$ has finite support and $\|\partial\flux_n\|_\meas\leq\|\partial\flux\|_\meas$ for all $n\in\N$.
\end{lemma}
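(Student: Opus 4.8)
The plan is to decompose $\flux$ into a superposition of curves by Smirnov's theorem and then to truncate the superposition, keeping only those curves whose endpoints lie among the first $N$ atoms of $\partial\flux$. Since $\flux$ is acyclic, Smirnov's decomposition theorem \cite{Sm93} provides a measure $\pi$ on a set $\Gamma$ of non-closed simple Lipschitz curves in $\Omega$, each $\gamma\in\Gamma$ carrying a positive weight $\theta_\gamma$ so that it represents a mass flux $R_\gamma\in\massFluxes(\Omega)$ with $\|R_\gamma\|_\meas=\theta_\gamma\hd^1(\gamma)$ and $\partial R_\gamma=\theta_\gamma(\delta_{\gamma(1)}-\delta_{\gamma(0)})$, $\gamma(0)\neq\gamma(1)$, such that
\[
\flux=\int_\Gamma R_\gamma\,\d\pi(\gamma),\qquad
|\flux|=\int_\Gamma|R_\gamma|\,\d\pi,\qquad
\|\partial\flux\|_\meas=\int_\Gamma\|\partial R_\gamma\|_\meas\,\d\pi=\int_\Gamma 2\theta_\gamma\,\d\pi\,,
\]
i.e.\ the decomposition involves no cancellation of mass or of boundary. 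In particular $\int_\Gamma\theta_\gamma\,\d\pi=\tfrac12\|\partial\flux\|_\meas<\infty$ and $\int_\Gamma\|R_\gamma\|_\meas\,\d\pi=\|\flux\|_\meas<\infty$.

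The decisive point is that, for $\pi$-almost every $\gamma$, both endpoints $\gamma(0)$ and $\gamma(1)$ are atoms of $\partial\flux$. To see this, set $\mu^-:=\int_\Gamma\theta_\gamma\delta_{\gamma(0)}\,\d\pi$ and $\mu^+:=\int_\Gamma\theta_\gamma\delta_{\gamma(1)}\,\d\pi$, so that $\partial\flux=\mu^+-\mu^-$ and $\|\mu^+\|_\meas+\|\mu^-\|_\meas=\int_\Gamma 2\theta_\gamma\,\d\pi=\|\partial\flux\|_\meas=\|\mu^+-\mu^-\|_\meas$; equality in the triangle inequality $\|\mu^+-\mu^-\|_\meas\leq\|\mu^+\|_\meas+\|\mu^-\|_\meas$ forces $\mu^+$ and $\mu^-$ to be mutually singular, whence $\mu^\pm$ are exactly the positive and negative parts of $\partial\flux$. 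As $\partial\flux$ is $0$-rectifiable it is purely atomic, concentrated on a countable set $\{x_1,x_2,\ldots\}\subset\Omega$, so the pushforward measures $\mu^\pm$ are concentrated on $\{x_j\}$, which (using $\theta_\gamma>0$) means precisely that $\gamma(0),\gamma(1)\in\{x_j\}$ for $\pi$-a.e.\ $\gamma$. Now set
\[
\Gamma_N:=\bigl\{\gamma\in\Gamma:\gamma(0),\gamma(1)\in\{x_1,\ldots,x_N\}\bigr\}
\qquad\text{and}\qquad
\flux_N:=\int_{\Gamma_N}R_\gamma\,\d\pi(\gamma)\,;
\]
the sets $\Gamma_N$ are measurable (preimages of a finite set under the continuous endpoint maps) and increase to a $\pi$-conull subset of $\Gamma$.

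It remains to verify the claimed properties. Decomposing $\flux=\flux_N+(\flux-\flux_N)$, combining subadditivity of $\|\cdot\|_\meas$ and of $\|\partial(\cdot)\|_\meas$ with the no-cancellation identities of the first display and with $\|\flux_N\|_\meas\leq\int_{\Gamma_N}\|R_\gamma\|_\meas\,\d\pi$ etc., one finds that all intermediate inequalities are equalities; in particular $\|\flux\|_\meas=\|\flux_N\|_\meas+\|\flux-\flux_N\|_\meas$, hence (since $|\flux|\leq|\flux_N|+|\flux-\flux_N|$ as measures with the same total mass) also $|\flux|=|\flux_N|+|\flux-\flux_N|$, and similarly $|\flux_N|=\int_{\Gamma_N}|R_\gamma|\,\d\pi$. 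From $|\flux|=|\flux_N|+|\flux-\flux_N|$ a pointwise triangle-inequality argument on the polar vectors shows $\flux_N=\kappa_N\flux$ with $\kappa_N:=\d|\flux_N|/\d|\flux|\colon\Omega\to[0,1]$ (an $\flux$-measurable function), and $\Gamma_N\subseteq\Gamma_{N+1}$ gives $|\flux_N|\leq|\flux_{N+1}|$, so the $\kappa_N$ may be chosen monotonically increasing. Strong convergence $\flux_N\to\flux$ follows from $\|\flux-\flux_N\|_\meas\leq\int_{\Gamma\setminus\Gamma_N}\|R_\gamma\|_\meas\,\d\pi\to0$ by dominated convergence. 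Next, $\partial\flux_N=\int_{\Gamma_N}\theta_\gamma(\delta_{\gamma(1)}-\delta_{\gamma(0)})\,\d\pi$ is a finite signed measure supported in $\{x_1,\ldots,x_N\}$, so $\flux_N\in\massFluxes(\Omega)$, $\partial\flux_N$ has finite support, and $\|\partial\flux_N\|_\meas\leq\int_{\Gamma_N}\|\partial R_\gamma\|_\meas\,\d\pi\leq\|\partial\flux\|_\meas$. Finally $\flux_N$ is acyclic: if $\flux_N=S^a+S^b$ with $\partial S^b=0$, $S^b\neq0$ and $\|\flux_N\|_\meas=\|S^a\|_\meas+\|S^b\|_\meas$, then $\flux=(S^a+(\flux-\flux_N))+S^b$ satisfies, using $\|\flux\|_\meas=\|\flux_N\|_\meas+\|\flux-\flux_N\|_\meas$ and subadditivity, $\|\flux\|_\meas=\|S^a+(\flux-\flux_N)\|_\meas+\|S^b\|_\meas$ with $S^b\neq0$ divergence-free, contradicting the acyclicity of $\flux$.

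The main obstacle is the decisive point of the second paragraph: extracting from Smirnov's \emph{boundary} no-cancellation property (not merely the mass one) the conclusion that $\pi$-almost every decomposition curve has both endpoints at atoms of $\partial\flux$. Everything after that is bookkeeping on the Smirnov superposition, although some care is needed in stating the precise form of the decomposition theorem used --- measurability of $\gamma\mapsto R_\gamma$ and of the endpoint maps, positivity of the weights, and the "as measures" identities $|\flux|=\int_\Gamma|R_\gamma|\,\d\pi$ and $|\partial\flux|=\int_\Gamma|\partial R_\gamma|\,\d\pi$.
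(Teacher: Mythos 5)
Your proof is correct and follows essentially the same strategy as the paper: apply Smirnov's decomposition, exploit the absence of boundary cancellation to conclude that $\pi$-almost every decomposition curve has both endpoints among the atoms of $\partial\flux$, and then truncate the superposition to curves whose endpoints lie in a finite subset of those atoms, arguing as for \cref{thm:rectifiableBoundary} that the truncated flux has the form $\kappa_N\flux$. Your truncation (all curves with endpoints in the first $N$ atoms) is a minor variation of the paper's (first $n$ endpoint pairs ordered by $\mu$-weight), and you in fact supply two details the paper leaves terse: a clean Jordan-decomposition argument for why a.e.\ curve's endpoints are atoms, and an explicit acyclicity check for $\flux_N$.
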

The proof of both lemmas will be provided in \cref{sec:lemmas}.
A direct consequence is the following \namecref{thm:polyhedralBoundaryApprox}.
\begin{lemma}[Approximation of rectifiable $1$-chains by $1$-chains with polyhedral boundary]\label{thm:polyhedralBoundaryApprox}
Let $h$ be a transportation cost.
For any rectifiable $\chain\in\finiteFlatChains1(\Omega)$ there exists a sequence $\chain_1,\chain_2,\ldots\in\finiteFlatChains1(\Omega)$
with $\chain_n\to\chain$ in mass as $n\to\infty$
such that $\hMass{\chain_n}\to\hMass\chain$ as $n\to\infty$
and such that $\partial\chain_n$ is polyhedral with $\mass{\partial\chain_n}\leq\mass{\partial\chain}$ as well as $\hMass[\tilde h]{\chain_n}\leq\hMass[\tilde h]\chain$ for all $n\in\N$ and transportation costs $\tilde h$.
\end{lemma}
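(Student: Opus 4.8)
The plan is to split off the divergence-free part of $\chain$, apply \cref{thm:rectifiableBoundary} and then \cref{thm:polyhedralBoundary} to the remaining acyclic piece, combine the two resulting families by a diagonal argument, and read off the asserted properties from the integral representation of the $h$-mass on rectifiable chains. So first I would reduce to the acyclic case: writing $\flux:=\iota_1(\chain)=\theta\vec\tau\,\hd^1\restr\Sigma$ with $\Sigma$ a $1$-rectifiable set, $\vec\tau$ a unit tangent and $\theta$ a density, Smirnov's decomposition \cite{Sm93} yields $\flux=\flux^a+\flux^b$ with $\flux^a$ acyclic, $\partial\flux^b=0$ and $\|\flux\|_\meas=\|\flux^a\|_\meas+\|\flux^b\|_\meas$. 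Both summands are again $1$-rectifiable and concentrated on $\Sigma$, say $\flux^a=\theta^a\vec\tau\,\hd^1\restr\Sigma$ and $\flux^b=\theta^b\vec\tau\,\hd^1\restr\Sigma$, and the equality of masses forces $\theta^a$ and $\theta^b$ to carry the sign of $\theta$ with $|\theta^a|+|\theta^b|=|\theta|$ $\hd^1$-a.e.\ on $\Sigma$; moreover $\partial\flux^a=\partial\flux$. Hence $\flux^a$ is a $1$-rectifiable acyclic mass flux with $\mass{\partial\flux^a}=\mass{\partial\chain}$, while $\flux^b$ will be kept fixed.

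Next I would chain the lemmas and diagonalize. \cref{thm:rectifiableBoundary} applied to $\flux^a$ produces $\flux^a_n=\lambda_n\flux^a\to\flux^a$ strongly with $\partial\flux^a_n$ $0$-rectifiable and $\|\partial\flux^a_n\|_\meas\leq\|\partial\flux^a\|_\meas$; \cref{thm:polyhedralBoundary} applied to each $\flux^a_n$ produces $\flux^a_{n,m}=\kappa_{n,m}\flux^a_n\to\flux^a_n$ strongly as $m\to\infty$ with $\partial\flux^a_{n,m}$ of finite support and $\|\partial\flux^a_{n,m}\|_\meas\leq\|\partial\flux\|_\meas$. Choosing $m_n$ with $\|\flux^a_{n,m_n}-\flux^a_n\|_\meas<1/n$ and setting $\psi_n:=\kappa_{n,m_n}\lambda_n\colon\Omega\to[0,1]$, the mass flux $\tilde\flux_n:=\psi_n\flux^a=\flux^a_{n,m_n}$ satisfies $\tilde\flux_n\to\flux^a$ strongly, $\partial\tilde\flux_n$ has finite support, and $\|\partial\tilde\flux_n\|_\meas\leq\|\partial\flux\|_\meas$. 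I then set $\chain_n:=\iota_1^{-1}(\flux^b+\tilde\flux_n)\in\finiteFlatChains1(\Omega)$, which is rectifiable, concentrated on $\Sigma$ with density $\theta^b+\psi_n\theta^a$.

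Finally I would verify the conclusions. Mass convergence is immediate from $\mass{\chain_n-\chain}=\|\tilde\flux_n-\flux^a\|_\meas\to0$. Since $\partial\flux^b=0$, we have $\iota_0(\partial\chain_n)=\partial\tilde\flux_n$, a finite discrete measure, so $\partial\chain_n$ is polyhedral with $\mass{\partial\chain_n}=\|\partial\tilde\flux_n\|_\meas\leq\|\partial\flux\|_\meas=\mass{\partial\chain}$. Because $\theta^a,\theta^b$ share the sign of $\theta$ and $0\leq\psi_n\leq1$, one has $|\theta^b+\psi_n\theta^a|=|\theta^b|+\psi_n|\theta^a|\leq|\theta|$ $\hd^1$-a.e.\ on $\Sigma$, so for any transportation cost $\tilde h$ the integral representation of the $h$-mass on rectifiable chains and the monotonicity of $\tilde h$ give
\begin{equation*}
\hMass[\tilde h]{\chain_n}=\int_\Sigma\tilde h\bigl(|\theta^b|+\psi_n|\theta^a|\bigr)\,\d\hd^1\leq\int_\Sigma\tilde h(|\theta|)\,\d\hd^1=\hMass[\tilde h]\chain\,.
\end{equation*}
In particular $\limsup_n\hMass{\chain_n}\leq\hMass\chain$, while $\liminf_n\hMass{\chain_n}\geq\hMass\chain$ holds by lower semi-continuity of $\hMass\cdot$ along the flatly (indeed in-mass) convergent sequence $\chain_n\to\chain$; hence $\hMass{\chain_n}\to\hMass\chain$.

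I expect the routine parts to be the diagonal extraction and the bookkeeping through the isomorphisms $\iota_0,\iota_1$. The substantive ingredients --- and the places where something could go wrong --- are the Smirnov-based reduction to a rectifiable \emph{acyclic} summand that still carries all of the boundary, and the representation $\hMass\chain=\int_\Sigma h(|\theta|)\,\d\hd^1$ for rectifiable chains, which is precisely what turns ``damping the density by the factor $\psi_n\leq1$'' into the inequalities $\hMass[\tilde h]{\chain_n}\leq\hMass[\tilde h]\chain$; all genuinely hard work has already been absorbed into \cref{thm:rectifiableBoundary,thm:polyhedralBoundary}.
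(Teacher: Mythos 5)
Your proposal is correct and follows essentially the same route as the paper: Smirnov's decomposition into an acyclic part $\flux^a$ carrying the boundary plus a cycle $\flux^b$, an application of \cref{thm:rectifiableBoundary} followed by \cref{thm:polyhedralBoundary}, a diagonal extraction, and White's integral representation of the $h$-mass on rectifiable chains. The only (minor) deviation is in how $\hMass{\chain_n}\to\hMass\chain$ is obtained: the paper applies the Monotone Convergence Theorem at both stages and threads an explicit $h$-mass error estimate through the diagonal, while you deduce the limit from the one-sided bound $\hMass{\chain_n}\leq\hMass\chain$ (coming from $\psi_n\leq1$ and the sign alignment of $\theta^a,\theta^b$) together with the lower semi-continuity of $\hMass\cdot$ under flat (hence mass) convergence, which is automatic from its definition as a relaxation; this is a legitimate, slightly tidier way to close that step and has the side benefit of making the inequality $\hMass[\tilde h]{\chain_n}\leq\hMass[\tilde h]{\chain}$ explicit, which the paper leaves implicit.
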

\begin{proof}
First note that by White's structure theorem \cite[Sec.\,6]{Wh99b}
one can identify any $1$-rectifiable flat chain $\chain\in\finiteFlatChains1(\Omega)$
with a triple $[\Sigma,\theta,m]$ of a $1$-rectifiable set $\Sigma\subset\Omega$ with approximate tangent $\theta:\Sigma\to\mathbb S^{d-1}$ and a measurable function $m:\Sigma\to\R$ such that
\begin{equation*}
\iota_1(\chain)=m\theta\hd^1\restr\Sigma\,.
\end{equation*}
Again by \cite[Sec.\,6]{Wh99b}, its $h$-mass in this case can be expressed as
\begin{equation*}
\hMass\chain=\int_\Sigma h(|m|)\,\d\hd^1\,.
\end{equation*}

Now consider $\flux=\iota_1(\chain)=m\theta\hd^1\restr\Sigma$.
By Smirnov's decomposition theorem \cite[Thm.\,C]{Sm93} we can decompose $\flux=\flux^a+\flux^b$, where $\flux^a=m^a\theta\hd^1\restr\Sigma\in\massFluxes(\Omega)$ is rectifiable and acyclic with $\partial\flux^a=\partial\flux$ and $\flux^b=m^b\theta\hd^1\restr\Sigma\in\massFluxes(\Omega)$ (with $m^b$ having the same sign as $m^a$ pointwise) satisfies $\partial\flux^b=0$.
By \cref{thm:rectifiableBoundary} there is a monotonically increasing sequence $\lambda_1,\lambda_2,\ldots:\Omega\to[0,1]$ of $\flux^a$-measurable functions
such that $\lambda_n\to1$ monotonically $\flux^a$-almost everywhere
and such that $\partial(\lambda_n\flux^a)$ is $0$-rectifiable with $\|\partial(\lambda_n\flux^a)\|_\meas\leq\|\partial\flux^a\|_\meas=\|\partial\flux\|_\meas$ for all $n\in\N$.
Denote the flat $1$-chains corresponding to $\flux^a_n+\flux^b$ by $\tilde\chain_n=[\Sigma,\theta,\lambda_nm^a+m^b]$.
By the Monotone Convergence Theorem
\begin{equation*}
\hMass{\tilde\chain_n}
=\int_\Sigma h(|\lambda_nm^a+m^b|)\,\d\hd^1
\to\int_\Sigma h(|m^a+m^b|)\,\d\hd^1
=\hMass{\chain}
\end{equation*}
as $n\to\infty$ so that (potentially after passing to a subsequence) we may assume
\begin{equation*}
\mass{\tilde\chain_n-\chain}=\|\flux^a_n-\flux^a\|_\meas\leq\frac1n
\qquad\text{and}\qquad
\left|\hMass{\tilde\chain_n}-\hMass{\chain}\right|\leq\frac1n\,.
\end{equation*}
Similarly, appealing to \cref{thm:polyhedralBoundary} instead of \cref{thm:rectifiableBoundary},
for each $n\in\N$ there is some sequence $\chain_{n,k}=[\Sigma,\theta,\kappa_k\lambda_nm^a+m^b]$, $k=1,2,\ldots$, with $\partial\chain_{n,k}$ polyhedral, $\mass{\partial\chain_{n,k}}\leq\mass{\partial\tilde\chain_n}\leq\mass\chain$, and
\begin{equation*}
\mass{\chain_{n,k}-\tilde\chain_n}=\|\kappa_k\flux^a_n-\flux^a_n\|_\meas\leq\frac1k
\qquad\text{and}\qquad
\left|\hMass{\chain_{k,n}}-\hMass{\tilde\chain_n}\right|\leq\frac1k\,.
\end{equation*}
Thus, the sequence $\chain_n=\chain_{n,n}$ has all desired properties.
\end{proof}


\section{Proof of main lemmas}\label{sec:lemmas}
The proof uses Smirnov's decomposition theorem, part of which we restate for convenience.

\begin{definition}[Simple oriented curve]
A \emph{simple oriented curve of finite length} in $\Omega$ is a mass flux of the form
\begin{equation*}
\tilde\flux=\pushforward\gamma{\dot\gamma\hd^1\restr[0,1]}\,,
\end{equation*}
where $\pushforward\gamma\mu$ denotes the pushforward of a measure $\mu$ under a function $f$ and $\gamma:[0,1]\to\Omega$ is an injective Lipschitz curve.
Note that, writing $\delta_x$ for the Dirac mass at $x$,
\begin{equation*}
\partial\tilde\flux=\delta_{\gamma(1)}-\delta_{\gamma(0)}\,.
\end{equation*}
\end{definition}

\begin{theorem}[\protect{Smirnov's decomposition theorem, \cite[Thm.\,B-C]{Sm93}}]
For any acyclic $\flux\in\massFluxes(\Omega)$ there is a set $J$ of simple oriented curves of finite length and a nonnegative measure $\mu$ on $J$ such that
\begin{align*}
\flux&=\int_J\tilde\flux\,\d\mu(\tilde\flux)\,,\\
\|\flux\|_\meas&=\int_J\|\tilde\flux\|_\meas\,\d\mu(\tilde\flux)\,,\\
\|\partial\flux\|_\meas&=\int_J\|\partial\tilde\flux\|_\meas\,\d\mu(\tilde\flux)\,.
\end{align*}
Above, the first line means
$$\langle\flux,v\rangle=\int_J\langle\tilde\flux,v\rangle\,\d\mu(\tilde\flux)$$
for every smooth test vector field $v:\Omega\to\R^d$, where $\langle\cdot,\cdot\rangle$ denotes the dual pairing between vector-valued Radon measures and continuous vector fields on $\Omega$.
\end{theorem}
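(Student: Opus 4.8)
Since this theorem is quoted verbatim from \cite{Sm93} we only restate it; a self-contained proof is outside the scope of this note, but let me outline the strategy one would follow. The plan is to decompose $\flux$ by repeatedly peeling off single curves and accumulating them into a measure on curve space. The starting point is that, writing $\vec\flux=\d\flux/\d\|\flux\|_\meas$ for the Radon--Nikodym density (a $\|\flux\|_\meas$-a.e.\ defined unit vector field) and splitting $\partial\flux=\partial^+\flux-\partial^-\flux$ into positive and negative parts, the charge $\flux$ behaves like an (a.e.\ defined) one-dimensional transport pattern from the ``sources'' $\spt\partial^-\flux$ to the ``sinks'' $\spt\partial^+\flux$. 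Acyclicity is precisely the statement that this pattern contains no closed loop carrying positive mass, so that the whole of $\flux$ must be realised by arcs rather than by divergence-free ``elementary solenoids''.

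The central ingredient is an \emph{extraction lemma}: for any acyclic $\flux\neq0$ there are a weight $t>0$ and a simple oriented Lipschitz curve $\tilde\flux=\pushforward{\gamma}{(\dot\gamma\,\hd^1\restr[0,1])}$, with $\gamma$ injective, such that $\flux-t\tilde\flux\in\massFluxes(\Omega)$ is again acyclic and
\begin{equation*}
\|\flux-t\tilde\flux\|_\meas=\|\flux\|_\meas-t\|\tilde\flux\|_\meas,\qquad\|\partial(\flux-t\tilde\flux)\|_\meas=\|\partial\flux\|_\meas-t\|\partial\tilde\flux\|_\meas.
\end{equation*}
Such a curve would be produced by starting at a point of $\spt\partial^-\flux$ and ``following the direction field $\vec\flux$'': one builds polygonal approximants on a dyadic grid, at each visited cell stepping in the direction of the cell-averaged flux and assigning the largest weight the remaining flux supports, then lets the mesh tend to zero. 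Curves of bounded length in the compact set $\Omega$ are compact, so a limit curve exists; lower semicontinuity of mass upgrades the approximate additivity along the polygon to the exact identities above, and acyclicity of $\flux$ forces the limit curve to be injective (a self-intersection of positive width would produce a loop in $\flux$) and to terminate in $\spt\partial^+\flux$ rather than spiralling indefinitely. Making the phrase ``following the direction field'' rigorous at the level of measures — one cannot literally solve an ODE since $\vec\flux$ is only defined $\|\flux\|_\meas$-a.e.\ — and carrying out the grid constructions in a Borel-measurable way is the main obstacle, and indeed the technical heart of \cite{Sm93}.

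Granting the extraction lemma, I would iterate it to exhaust the charge: for each $\varepsilon>0$, finitely many extractions reduce the mass below $\varepsilon$ (each extracted arc peels off a definite fraction of what remains), giving a finite sum $\sum_it_i\tilde\flux_i$, i.e.\ a discrete measure $\mu_\varepsilon$ on the space $J$ of simple oriented curves of finite length, with $\|\flux-\int_J\tilde\flux\,\d\mu_\varepsilon\|_\meas\leq\varepsilon$. The boundary identity forces $\mu_\varepsilon$ to assign little mass to long curves, so the family $\{\mu_\varepsilon\}$ is tight; extract a weak-$\ast$ limit $\mu$. Passing to the limit in the dual pairing against test vector fields yields $\flux=\int_J\tilde\flux\,\d\mu$; the mass identity follows from $\|\flux\|_\meas\leq\int_J\|\tilde\flux\|_\meas\,\d\mu\leq\liminf_{\varepsilon\to0}\int_J\|\tilde\flux\|_\meas\,\d\mu_\varepsilon=\|\flux\|_\meas$, using subadditivity of the total variation for the first inequality, lower semicontinuity of $\tilde\flux\mapsto\|\tilde\flux\|_\meas$ for the second, and exact additivity of mass along the $\varepsilon$-construction together with vanishing residual for the last step; the boundary identity is obtained identically. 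The remaining care goes into fixing a topology on $J$ under which $\tilde\flux\mapsto\|\tilde\flux\|_\meas$ and $\tilde\flux\mapsto\|\partial\tilde\flux\|_\meas$ are lower semicontinuous and $\tilde\flux\mapsto\langle\tilde\flux,v\rangle$ is continuous, and into checking that the limit $\mu$ remains concentrated on injective curves.
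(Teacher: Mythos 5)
You correctly observe that the paper provides no proof of this statement; it is cited as a theorem from \cite{Sm93} and used as a black box throughout. There is therefore nothing in the paper to compare your outline against, and restating the result with a citation is exactly what the paper does.

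As an informal sketch your proposal conveys reasonable intuition, but it has two substantive gaps that prevent it from serving even as a proof outline. First, the ``extraction lemma'' is asserted without argument and essentially \emph{is} the theorem: producing one simple oriented curve $\tilde\flux$ and a weight $t>0$ with the \emph{exact} additivity $\|\flux-t\tilde\flux\|_\meas=\|\flux\|_\meas-t\|\tilde\flux\|_\meas$ (and the analogous boundary identity) is already the hard analytic content. The grid-following heuristic you describe does not obviously yield an exact identity rather than an $o(1)$ defect in the limit, and as you yourself note, making ``follow the direction field'' rigorous when the field is only $\|\flux\|_\meas$-a.e.\ defined is the technical core of Smirnov's work; packaging that difficulty into a single unproved lemma does not resolve it. Second, the exhaustion step --- ``finitely many extractions reduce the residual mass below $\varepsilon$'' --- is unjustified: nothing in the posited extraction lemma guarantees that each curve carries a definite fraction of the remaining mass, so neither termination of the iteration nor convergence of an infinite sum follows; one could a priori extract curves of rapidly decaying weight and never exhaust $\flux$. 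Smirnov in fact constructs the decomposing measure $\mu$ globally rather than curve by curve, and passes to limits at the level of measures on the space of curves. Since the paper treats the theorem as a citation and you explicitly defer to \cite{Sm93} for the details, the right resolution is simply to cite the result without attempting a sketch, or to flag your outline clearly as a heuristic rather than as the skeleton of a proof.
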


We will furthermore use the following two simple results about the generic intersection between a regular grid and a rectifiable set and a rectifiable mass flux.

\begin{lemma}[Rectifiable set and grid]\label{thm:grid}
Let $\Sigma\subset\Omega$ be $1$-rectifiable and define the rectilinear grid
\begin{equation*}
\mathcal G_n=\left\{y\in\R^d\,\middle|\,y_i=\tfrac mn\text{ for some }m\in\N\text{ and }i\in\{1,\ldots,d\}\right\}
\end{equation*}
of grid width $\frac1n$.
Then for almost every $x\in\R^d$ the set $\Sigma\cap(x+\mathcal G_n)$ is countable for all $n\in\N$.
\end{lemma}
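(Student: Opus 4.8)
The plan is to reduce the assertion to a one-dimensional statement about level sets of the coordinate projections and then to reassemble it by a Fubini-type argument. For $i\in\{1,\ldots,d\}$ and $t\in\R$ write $H_{i,t}=\{y\in\R^d:y_i=t\}$ and put
\[
B_i=\{t\in\R:\Sigma\cap H_{i,t}\text{ is uncountable}\}.
\]
Since $x+\mathcal G_n=\bigcup_{i=1}^d\bigcup_{m\in\N}H_{i,x_i+m/n}$, the set $\Sigma\cap(x+\mathcal G_n)$ is a countable union of the sets $\Sigma\cap H_{i,x_i+m/n}$; hence it is countable for every $n\in\N$ precisely when $x_i+m/n\notin B_i$ for all $i\in\{1,\ldots,d\}$ and $m,n\in\N$. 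Consequently the exceptional set $\{x\in\R^d:\Sigma\cap(x+\mathcal G_n)\text{ uncountable for some }n\}$ is contained in $\bigcup_{i=1}^d\{x\in\R^d:x_i\in\bigcup_{n,m\in\N}(B_i-m/n)\}$. So the first step is to show $\hd^1(B_i)=0$ for every $i$; granting this, $\bigcup_{n,m}(B_i-m/n)$ is a countable union of translates of a $\hd^1$-null subset of $\R$, hence $\hd^1$-null, and by Fubini each of the finitely many slabs $\{x:x_i\in\bigcup_{n,m}(B_i-m/n)\}$ is $\hd^d$-null, which gives the claim.

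To establish $\hd^1(B_i)=0$ I would invoke the coarea (Eilenberg) inequality for the $1$-Lipschitz map $\pi_i(y)=y_i$, but only after exhausting $\Sigma$ by sets of finite $\hd^1$-measure, since a $1$-rectifiable set need not have finite, or even locally finite, $\hd^1$-measure. By definition $\Sigma\subset\bigcup_{j\in\N}M_j\cup N$ with $\hd^1(N)=0$ and each $M_j$ a $C^1$ $1$-submanifold. Each $M_j$ is second countable, hence has at most countably many connected components, each of which is diffeomorphic to an interval or a circle and therefore admits a compact exhaustion; since the inclusion into $\R^d$ is locally Lipschitz, writing each component as a countable union of compact subarcs of finite length shows that $\Sigma$ is contained in a countable union $\bigcup_{k\in\N}\Sigma_k$ with $\hd^1(\Sigma_k)<\infty$. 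The coarea inequality then yields
\[
\int_\R\hd^0\bigl(\Sigma_k\cap H_{i,t}\bigr)\,\d t\le C\,\hd^1(\Sigma_k)<\infty,
\]
so $\Sigma_k\cap H_{i,t}$ is finite for $\hd^1$-a.e.\ $t$. Taking the union over the countably many $k$ shows $\Sigma\cap H_{i,t}$ is countable for $\hd^1$-a.e.\ $t$, i.e.\ $\hd^1(B_i)=0$.

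There is no deep obstacle here; the two points that need attention are (i) the observation that a $1$-rectifiable set in the present sense is $\sigma$-finite with respect to $\hd^1$, so that the coarea inequality is not applied vacuously (applied to $\Sigma$ directly it would be empty of content if $\hd^1(\Sigma)=\infty$), and (ii) the quantifier bookkeeping that upgrades a statement valid for $\hd^1$-a.e.\ $t$ and all $i,m,n$ to one valid for $\hd^d$-a.e.\ $x$, using only countability of $\N$ and Fubini. As an alternative to the coarea step one could instead fix a connected component $C$ of some $M_j$ and apply Sard's theorem to the $C^1$ function $\pi_i|_C$: off the $\hd^1$-null set of its critical values every level set is discrete in $C$, hence countable, and summing over the countably many components and indices $j$ again gives $\hd^1(B_i)=0$; the coarea route is shorter and additionally produces finiteness, not merely countability, of almost every slice.
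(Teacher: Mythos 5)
Your argument is correct and takes a genuinely different route in the final step. You reduce the claim in essentially the same way the paper does: to showing that the slice $\Sigma\cap H_{i,t}$ is countable for $\hd^1$-a.e.\ $t$, followed by the same countable-union-plus-Fubini bookkeeping to upgrade from almost every $t$ and all $i,m,n$ to almost every $x\in\R^d$. Where the paper then concludes in one line by citing Federer's coarea formula for rectifiable sets \cite[3.2.22(2)]{Fe69}, you instead invoke the elementary Eilenberg (coarea) inequality $\int^*\hd^0(\Sigma_k\cap H_{i,t})\,\d t\le C\,\hd^1(\Sigma_k)$ piece by piece, after first establishing that a $1$-rectifiable set in the paper's sense is $\sigma$-finite for $\hd^1$ (via second countability and $\sigma$-compactness of each $C^1$ piece). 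This is slightly longer but more self-contained, and it explicitly addresses a finiteness issue that Federer's formulation -- which in his terminology presupposes $\hd^1(W)<\infty$ -- would otherwise require one to patch by the same $\sigma$-finiteness observation when $\hd^1(\Sigma)$ is not finite. Both routes are sound; your Sard-based alternative would also work but, as you note, yields only countability rather than finiteness of a.e.\ slice. One small point worth keeping in mind: the slices need not be Borel, so the Eilenberg inequality should be read with the upper integral, as you wrote, and "null" as "outer-measure null"; this costs nothing in the Fubini step.
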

\begin{proof}
Since $\mathcal G_n=\bigcup_{i=1}^d\mathcal S^i_n$ for
\begin{equation*}
\mathcal S^i_n=\left\{y\in\R^d\,\middle|\,y_i=\tfrac mn\text{ for some }m\in\N\right\}\,,
\end{equation*}
it suffices to show for fixed $1\leq i\leq d$ that for almost every $s\in\R$
the set $\Sigma\cap(se_i+\mathcal S^i_n)$ is countable for all $n\in\N$ (here $e_i$ denotes the $i$\textsuperscript{th} Cartesian unit vector).
To this end it suffices to show that for almost all $r\in\R$ the intersection of $\Sigma$ with the hyperplane
\begin{equation*}
\mathcal P_r^i=\left\{y\in\R^d\,\middle|\,y_i=r\right\}
\end{equation*}
is countable.
Indeed, $\mathcal S=\bigcup_{n=1}^\infty\mathcal S^i_n$ can be expressed as a countable union $\bigcup_{j=1}^\infty\mathcal P_{r_j}^i$ of such hyperplanes;
thus the set of $s\in\R$ for which $\Sigma\cap(se_i+\mathcal S)$ is uncountable is given by $\bigcup_{j=1}^\infty R_j$ with
\begin{equation*}
R_j=\left\{s\in\R\,\middle|\,\Sigma\cap\mathcal P_{s+r_j}^i\text{ is uncountable}\right\}\,,
\end{equation*}
which we show to be a nullset below.
Consequently, $\Sigma\cap(se_i+\mathcal S)$ is countable for almost all $s\in\R$.

To show that $\Sigma\cap\mathcal P_r^i$ is countable for almost all $r\in\R$
it suffices to cite the coarea formula for rectifiable sets \cite[3.2.22(2) with $W=\Sigma$, $f(x)=x_i$]{Fe69}
which states that $\Sigma\cap\mathcal P_r^i$ is $\hd^0$-measurable and $\hd^0$-rectifiable for almost all $r\in\R$.
\end{proof}

\begin{lemma}[Smirnov curves and grid]\label{thm:gridCurve}
Let $\flux\in\massFluxes(\Omega)$ be acyclic and $1$-rectifiable so that $\flux=\flux\restr\Sigma$ for a $1$-rectifiable set $\Sigma\subset\Omega$ and there exists a decomposition
\begin{equation*}
\flux=\int_J\tilde\flux\,\d\mu(\tilde\flux)
\end{equation*}
into simple oriented curves by Smirnov's decomposition theorem.
Then for almost all $x\in\R^d$, $\mu(J_x^n)=0$ for all $n\in\N$ with 
\begin{equation*}
J_x^n=\left\{\tilde\flux=\pushforward\gamma{\dot\gamma\hd^1\restr[0,1]}\in J\,\middle|\,\gamma([0,1])\cap(x+\mathcal G_n)\not\subset\Sigma\right\}\,,
\end{equation*}
that is, for any $n\in\N$ the intersection of $\mu$-almost every Smirnov curve with $(x+\mathcal G_n)$ lies in $\Sigma$.
\end{lemma}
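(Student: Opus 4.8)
The plan is to deduce the statement from the mass identity in Smirnov's decomposition theorem combined with a Fubini argument over the grid shift $x$. Write each Smirnov curve as $\tilde\flux=\pushforward\gamma{\dot\gamma\hd^1\restr[0,1]}$ with $\gamma:[0,1]\to\Omega$ injective and Lipschitz. The first step is to record that the total variation measure of such a curve is its arclength measure,
\[
\|\tilde\flux\|_\meas=\pushforward\gamma{|\dot\gamma|\hd^1\restr[0,1]}=\hd^1\restr\gamma([0,1])\,,
\]
where the first equality uses injectivity of $\gamma$ (there is no cancellation in the pushforward of a vector measure under an injective map) and the second is the area formula for injective Lipschitz maps. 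Inserting this into the mass identity $\|\flux\|_\meas=\int_J\|\tilde\flux\|_\meas\,\d\mu(\tilde\flux)$ and using that $\flux=\flux\restr\Sigma$ forces $\|\flux\|_\meas(\Omega\setminus\Sigma)=0$ (for a fixed Borel representative of $\Sigma$) yields
\[
0=\|\flux\|_\meas(\Omega\setminus\Sigma)=\int_J\hd^1\big(\gamma([0,1])\setminus\Sigma\big)\,\d\mu(\tilde\flux)\,,
\]
so that the set $J_0$ of curves $\tilde\flux\in J$ with $\hd^1(\gamma([0,1])\setminus\Sigma)=0$ has full $\mu$-measure.

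Next I would fix a single curve $\tilde\flux\in J_0$, abbreviate $\Gamma=\gamma([0,1])$ and $E=\Gamma\setminus\Sigma$, and show that $\{x\in\R^d:\tilde\flux\in J_x^n\}$ has $d$-dimensional Lebesgue measure zero for every $n\in\N$. Indeed, $\Gamma\cap(x+\mathcal G_n)\not\subset\Sigma$ is equivalent to $(x+\mathcal G_n)\cap E\neq\emptyset$, which holds precisely for $x\in E-\mathcal G_n$. Since $\mathcal G_n$ is a countable union of affine hyperplanes of the form $\{y:y_i=m/n\}$, the set $E-\mathcal G_n$ is a countable union of sets, each contained in $\{q\in\R^d:q_i\in\pi_i(E)-m/n\}$, where $\pi_i$ denotes the $i$-th coordinate projection. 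As $\pi_i$ is $1$-Lipschitz and $\hd^1(E)=0$, the set $\pi_i(E)\subset\R$ is a Lebesgue nullset; hence each of these sets, and therefore $E-\mathcal G_n$ itself, is Lebesgue negligible in $\R^d$.

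Finally, let $\mathcal L^d$ denote Lebesgue measure on $\R^d$ and let $f_n:J\times\R^d\to\{0,1\}$ be the function taking the value $1$ exactly when $\tilde\flux\in J_x^n$. By the previous step $f_n(\tilde\flux,\cdot)$ vanishes $\mathcal L^d$-almost everywhere for every $\tilde\flux\in J_0$, and $J\setminus J_0$ is $\mu$-negligible, so Tonelli's theorem gives
\[
\int_{\R^d}\mu(J_x^n)\,\d x=\int_J\mathcal L^d\big(\{x\in\R^d:\tilde\flux\in J_x^n\}\big)\,\d\mu(\tilde\flux)=0\,,
\]
whence $\mu(J_x^n)=0$ for $\mathcal L^d$-almost every $x$. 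Removing the union over $n\in\N$ of these countably many exceptional nullsets proves the claim.

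The geometric estimates above are elementary; the step that requires care is the joint $\mu\otimes\mathcal L^d$-measurability of $f_n$ needed to apply Tonelli. I expect to obtain it by rewriting ``$\tilde\flux\in J_x^n$'' as ``there exist $i\in\{1,\dots,d\}$ and $m\in\N$ with $x_i+m/n\in\pi_i\big(\gamma([0,1])\cap\gamma^{-1}(\Omega\setminus\Sigma)\big)$'', a countable disjunction of conditions measurable for the standard measurable structure on the space $J$ of Smirnov curves -- the same structure under which the mass identity invoked above is meaningful. This measurability bookkeeping, rather than any of the estimates, is the main obstacle.
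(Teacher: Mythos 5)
Your proposal is correct and follows the same overall strategy as the paper: use Smirnov's mass identity to deduce that $\mu$-almost every Smirnov curve lies $\hd^1$-a.e.\ in $\Sigma$, show that each such curve contributes a Lebesgue-null set of bad shifts $x$, and conclude by Tonelli. The one place you diverge is the middle, single-curve step. The paper proves the contrapositive (``$\hd^d(A^\gamma)>0\Rightarrow\hd^1(\gamma([0,1])\setminus\Sigma)>0$'') via a Lebesgue-density-point argument, tracking how the first grid hit off $\Sigma$ moves as $x$ is perturbed in a coordinate direction. You instead observe directly that $A^\gamma=\{x:\gamma([0,1])\cap(x+\mathcal G_n)\not\subset\Sigma\}$ equals $E-\mathcal G_n$ with $E=\gamma([0,1])\setminus\Sigma$, and that $E-\mathcal G_n$ is a countable union of slabs of the form $\{q:q_i\in\pi_i(E)-m/n\}$, each Lebesgue-null because $\pi_i$ is $1$-Lipschitz and $\hd^1(E)=0$. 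This is a genuine simplification: it replaces the density argument and the implicit continuity of the ``first hit'' map by an elementary covering, and it delivers the implication in exactly the direction it is used rather than via the contrapositive. The rest (the identification $|\tilde\flux|=\hd^1\restr\gamma([0,1])$ from injectivity and the area formula, the passage from the scalar Smirnov mass identity to the measure-level identity $|\flux|=\int_J|\tilde\flux|\,\d\mu$, and the measurability remark for Tonelli) matches what the paper does or leaves implicit, so there is no gap.
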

\begin{proof}
Obviously it suffices to prove the statement for fixed $n\in\N$, which we shall assume in the following.
Below, we will denote the Lipschitz curve associated with a simple oriented curve $\tilde\flux\in J$ by $\gamma_{\tilde\flux}$
and the complement of $\Sigma$ by $\Sigma^c$.

\emph{Step\,1.}
We first show for any Lipschitz curve $\gamma:[0,1]\to\Omega$ that
\begin{equation*}
\hd^d(A^\gamma)>0
\text{ implies }
\hd^1(\gamma([0,1])\setminus\Sigma)>0\,,
\end{equation*}
where
\begin{equation*}
A^\gamma=\{x\in\R^d\,|\,\gamma([0,1])\cap(x+\mathcal G_n)\not\subset\Sigma\}\,.
\end{equation*}
Indeed, assume $\hd^d(A^\gamma)>0$ and let $x\in A^\gamma$ be such that $\gamma(0)\notin(x+\mathcal G_n)$ and such that for $i=1,\dots,d$ the set
\begin{equation*}
A^\gamma_i(x)=\{s\in\R\,|\,(x_1,\dots,x_{i-1},s,x_{i+1},\dots,x_d)\in A^\gamma\}\,.
\end{equation*}
has Lebesgue density $1$ in the coordinate $x_i$.
Now consider the point
\begin{equation*}
y^x=\gamma(t^x)
\qquad\text{with }
t^x=\min\{t\in[0,1]\,|\,\gamma(t)\in(x+\mathcal G_n)\setminus\Sigma\}
\end{equation*}
and denote by $i\in\{1,\dots,d\}$ the index such that $y^x_i$ is the $i$\textsuperscript{th} coordinate of one of the hyperplanes of the grid $x+\mathcal G_n$.
Assume without loss of generality that $\gamma(0)_i<y^x_i$.
It follows that for $\varepsilon>0$ sufficiently small and for every $s\in A_i^\gamma\cap [x_i-\varepsilon,x_i]$,
the point $y^{x(s)}$ with $x(s)=(x_1,\dots,x_{i-1},s,x_{i+1},\dots,x_d)$ satisfies $y^{x(s)}_i=s$. Now
\begin{multline*}
\hd^1(\gamma([0,1])\setminus\Sigma)
\geq\hd^1\left(\left\{y^{x(s)}\in\R^d\,\middle|\,s\in A_i^\gamma\cap [x_i-\varepsilon,x_i]\right\}\right)\\
\geq\hd^1\left(\left\{y^{x(s)}_i\in\R\,\middle|\,s\in A_i^\gamma\cap [x_i-\varepsilon,x_i]\right\}\right)
=\hd^1(A_i^\gamma\cap [x_i-\varepsilon,x_i])
>0\,.
\end{multline*}

\emph{Step\,2.}
We show $\hd^d(A^{\gamma_{\tilde\flux}})=0$ for $\mu$-almost every $\tilde\flux\in J$.
Indeed, we have
\begin{equation*}
0
=\|\flux\|_\meas-\|\flux\restr\Sigma\|_\meas
\geq\int_J\|\tilde\flux\|_\meas\,\d\mu(\tilde\flux)-\int_J\|\tilde\flux\restr\Sigma\|_\meas\,\d\mu(\tilde\flux)
=\int_J\|\tilde\flux\restr\Sigma^c\|_\meas\,\d\mu(\tilde\flux)\,,
\end{equation*}
thus $\mu$-almost every $\tilde\flux\in J$ satisfies
\begin{equation*}
\hd^1(\gamma_{\tilde\flux}([0,1])\setminus\Sigma)
=\int_0^1\boldsymbol1_{\Sigma^c}(\gamma_{\tilde\flux})|\dot\gamma_{\tilde\flux}|\,\d\hd^1
=\int_{\Sigma^c}\,\d|\tilde\flux|
=\|\tilde\flux\restr\Sigma^c\|_\meas
=0\,,
\end{equation*}
where $\boldsymbol1_{\Sigma^c}$ is the characteristic function of $\Sigma^c$.
By the previous step this implies the desired result.

\emph{Step\,3.}
Finally we show $\hd^d(\{x\in\R^d\,|\,\mu(J_x^n)>0\})=0$, which concludes the proof.
Indeed, let us introduce the function
\begin{equation*}
h:\R^d\times J\to\{0,1\}\,,\quad
h(x,\tilde\flux)=
\begin{cases}
1 &\text{if }\tilde\flux\in J_x^n,\\
0 &\text{otherwise,}
\end{cases}
\end{equation*}
then by Fubini's theorem we have
\begin{equation*}
\int_{\R^d}\mu(J_x^n)\,\d x
=\int_J\int_{\R^d}h(x,\tilde\flux)\,\d x\,\d\mu(\tilde\flux)
=\int_{J}\hd^d(A^{\gamma_{\tilde\flux}})\,\d\mu(\tilde\flux)\,,
\end{equation*}
which is zero by the previous step.
Thus, $\mu(J_x^n)=0$ for almost all $x\in\R^d$, as desired.
\end{proof}

\begin{proof}[Proof of \cref{thm:rectifiableBoundary}]
Since $\flux$ is rectifiable, there is a $1$-rectifiable set $\Sigma\subset\Omega$ with $\flux=\flux\restr\Sigma$.
Using Smirnov's decomposition theorem we decompose $\flux$ into simple oriented curves,
\begin{equation*}
\flux=\int_J\tilde\flux\,\d\mu(\tilde\flux)\,.
\end{equation*}
Now, for $n\in\N$ consider the rectilinear grids $\mathcal G_{2^n}$ from \cref{thm:grid} with grid size $2^{-n}$
and note $\mathcal G_{2^n}\subset\mathcal G_{2^m}$ for $m\geq n$.
Since $\Sigma$ is $1$-rectifiable, by \cref{thm:grid,thm:gridCurve} there exists $x\in\Omega$ such that for all $n\in\N$ the intersection $(x+\mathcal G_{2^n})\cap\Sigma$ is countable
and $\mu$-almost all $\tilde\flux$ intersect $(x+\mathcal G_{2^n})$ in points which belong to $\Sigma$.
Now define for each simple oriented curve $\tilde\flux=\pushforward\gamma{\dot\gamma\hd^1\restr[0,1]}$ the pruned curve
\begin{equation*}
\tilde\flux_n=\pushforward\gamma{\dot\gamma\hd^1\restr[t_n^{\gamma,l},t_n^{\gamma,r}]}
\quad\text{ with }\quad
t_n^{\gamma,l}=\min\{t\in[0,1]\,|\,\gamma(t)\in x+\mathcal G_{2^n}\}\,,\quad
t_n^{\gamma,r}=\max\{t\in[0,1]\,|\,\gamma(t)\in x+\mathcal G_{2^n}\}
\end{equation*}
(if $\gamma$ does not intersect $x+\mathcal G_{2^n}$ we shall define $\tilde\flux_n=0$ by convention).
Next set
\begin{equation*}
\flux_n=\int_J\tilde\flux_n\,\d\mu(\tilde\flux)\,.
\end{equation*}
Using the properties of the Smirnov decomposition we obtain
\begin{equation*}
\|\flux\|_\meas
=\int_J\|\tilde\flux\|_\meas\,\d\mu(\tilde\flux)
=\int_J\|\tilde\flux_n\|_\meas+\|\tilde\flux-\tilde\flux_n\|_\meas\,\d\mu(\tilde\flux)
\geq\|\flux_n\|_\meas+\|\flux-\flux_n\|_\meas\,.
\end{equation*}
Together with the triangle inequality this implies $\|\flux\|_\meas=\|\flux_n\|_\meas+\|\flux-\flux_n\|_\meas$,
which in turn implies equality of the total variation measures, $|\flux|=|\flux_n|+|\flux-\flux_n|$, as well as parallelism of the Radon--Nikodym derivatives $\frac{\d\flux}{\d|\flux|}$ and $\frac{\d\flux_n}{\d|\flux_n|}$.
Consequently,
\begin{equation*}
\flux_n=\lambda_n\flux
\end{equation*}
for some $\flux$-measurable $\lambda_n:\Omega\to[0,1]$.
Replacing $\flux$ with $\flux_m$ for $m>n$ in the above argument implies $\flux_n=\lambda_{n,m}\flux_m$ for some $\flux$-measurable $\lambda_{n,m}:\Omega\to[0,1]$
so that $\lambda_n=\lambda_{n,m}\lambda_m\leq\lambda_m$.
Furthermore,
\begin{equation*}
\|\flux-\flux_n\|_\meas
=\left\|\int_J\tilde\flux-\tilde\flux_n\,\d\mu(\tilde\flux)\right\|_\meas
\leq\int_J\|\tilde\flux-\tilde\flux_n\|_\meas\,\d\mu(\tilde\flux)
\to0
\end{equation*}
by the Monotone Convergence Theorem.
Finally, using that $\partial\tilde\flux_n=(\partial\tilde\flux_n)\restr(\Sigma\cap(x+\mathcal G_{2^n}))$ for $\mu$ almost all $\tilde\flux$ due to our choice of $x$, we can compute
\begin{equation*}
\partial\flux_n\restr(\Sigma\cap(x+\mathcal G_{2^n}))
=\left(\int_J\partial\tilde\flux_n\,\d\mu(\tilde\flux)\right)\restr(\Sigma\cap(x+\mathcal G_{2^n}))
=\int_J(\partial\tilde\flux_n)\restr(\Sigma\cap(x+\mathcal G_{2^n}))\,\d\mu(\tilde\flux)
=\int_J\partial\tilde\flux_n\,\d\mu(\tilde\flux)
=\partial\flux_n
\end{equation*}
so that $\partial\flux_n$ is $0$-rectifiable, and
\begin{equation*}
\|\partial\flux\|_\meas
=\int_J\|\partial\tilde\flux\|_\meas\,\d\mu(\tilde\flux)
\geq\int_J\|\partial\tilde\flux_n\|_\meas\,\d\mu(\tilde\flux)
\geq\|\partial\flux_n\|_\meas\,.
\qedhere
\end{equation*}
\end{proof}

\begin{proof}[Proof of \cref{thm:polyhedralBoundary}]
Again use Smirnov's decomposition theorem to decompose $\flux$ into simple oriented curves,
\begin{equation*}
\flux=\int_J\tilde\flux\,\d\mu(\tilde\flux)
\quad\text{ with }\quad
\|\partial\flux\|_\meas=\int_J\|\partial\tilde\flux\|_\meas\,\d\mu(\tilde\flux)\,.
\end{equation*}
Denote the parameterization associated with a simple oriented curve $\tilde\flux$ by $\gamma_{\tilde\flux}:[0,1]\to\Omega$.
Since $\partial\flux$ is rectifiable by assumption, there is a countable set $S$ of points with $\partial\flux=\partial\flux\restr S$.
This implies $\gamma_{\tilde\flux}(0),\gamma_{\tilde\flux}(1)\in S$ for $\mu$-almost all $\tilde\flux$.
For each $x\in S$ and $t=0,1$ we now introduce the set
\begin{equation*}
J_x^t=\left\{\tilde\flux\in J\,\middle|\,x=\gamma_{\tilde\flux}(t)\right\}\,,
\end{equation*}
which due to $t\in\{0,1\}$ does not depend on the particular choice of parameterizations $\gamma_{\tilde\flux}$ for curves $\tilde\flux$.
Note that each $J_x^t$ is $\mu$-measurable
(indeed, it is the preimage of $x$ under the mapping $\tilde\flux\mapsto\gamma_{\tilde\flux}(t)$ with $t=0,1$,
which is continuous with respect to the underlying topology on the space of simple oriented curves, the weak-$\ast$ topology).
Also note that $J_x^0\cap J_y^1$ is disjoint from $J_z^0\cap J_w^1$ whenever $(x,y)\neq(z,w)$ so that
\begin{equation*}
\sum_{(x,y)\in S\times S}\mu(J_x^0\cap J_y^1)
=\int_{\bigcup_{(x,y)\in S\times S}J_x^0\cap J_y^1}\,\d\mu(\tilde\flux)
=\int_{J}\,\d\mu(\tilde\flux)
=\frac12\int_J\|\partial\tilde\flux\|_\meas\,\d\mu(\tilde\flux)
=\frac12\|\partial\flux\|_\meas
<\infty\,.
\end{equation*}
Thus, since $S\times S$ is countable, it is straightforward to see that we can arrange all its elements $(x,y)\in S\times S$ in decreasing order with respect to $\mu(J_x^0\cap J_y^1)$.
Denote by $(x_i,y_i)$ the $i$\textsuperscript{th} element of $S\times S$ and define
\begin{equation*}
\flux_n
=\sum_{i=1}^n\int_{J_{x_i}^0\cap J_{y_i}^1}\tilde\flux\,\d\mu(\tilde\flux)
\end{equation*}
for $n\in\N$.
In the same manner as in the previous proof we obtain $\flux_n=\kappa_n\flux$ for a monotonically increasing sequence of $\flux$-measurable functions $\kappa_n:\Omega\to[0,1]$
as well as $\|\flux-\flux_n\|_\meas\to0$.
Furthermore, $\partial\flux_n=\partial\flux_n\restr\{x_1,\ldots,x_n,y_1,\ldots,y_n\}$ and
\begin{equation*}
\|\partial\flux_n\|_\meas\leq\sum_{i=1}^n\int_{J_{x_i}^0\cap J_{y_i}^1}\|\partial\tilde\flux\|_\meas\,\d\mu(\tilde\flux)\leq\|\partial\flux\|_\meas\,.
\qedhere
\end{equation*}
\end{proof}

\section{Weak-$\ast$ relaxation of the polyhedral $h$-mass}\label{sec:relaxation}


The strategy to prove \cref{thm:equivalence} is to first restrict to transportation costs $h$ with $h(m)\geq\alpha m$ for some $\alpha>0$ and all $m>0$ and to separately consider two cases:
If the right derivative $h'(0)$ of the transportation cost $h$ in $0$ is finite, then one can prove equivalence of both relaxations directly by construction.
Otherwise, the rectifiability theorem \cite[Thm.\,7.1]{Wh99} due to White or \cite[Prop.\,2.8]{CoRoMa17} implies that $\hMass\cdot$ is only finite on $1$-rectifiable flat chains.
In that case we employ our new approximation \namecref{thm:polyhedralBoundaryApprox} for $1$-rectifiable flat chains to reduce \cref{thm:equivalence} to the case of chains with polyhedral boundary.
This case in turn has already been solved by Chambolle, Ferrari, and Merlet \cite{ChFeMe18} (under the above condition on $h$).
The proof for general transportation cost $h$ can then be reduced to costs with $h(m)\geq\alpha m$ using a representation theorem for $\hMass\chain$. 

\begin{proof}[Proof of \cref{thm:equivalence} for $h(m)\geq\alpha m$]
First consider the case $h'(0)<\infty$.
Let $\tilde\chain_n\flatto\chain$ be a sequence of polyhedral $1$-chains with $\lim_{n\to\infty}\hMass{\tilde\chain_n}=\hMass\chain<\infty$ (if $\hMass\chain=\infty$ there is nothing to prove).
Due to our growth condition on $h$ 
we have $\alpha\mass{\tilde\chain_n}\leq\hMass{\tilde\chain_n}\to\hMass\chain$ so that the $\tilde\chain_n$ have equibounded mass.
If the boundaries $\partial\tilde\chain_n$ also have equibounded mass, then $\iota_1(\tilde\chain_n)\massfluxto\iota_1(\chain)$ and thus
\begin{equation*}
\brTptCost{\iota_1(\chain)}
\leq\liminf_{n\to\infty}\brTptCost{\iota_1(\tilde\chain_n)}
=\liminf_{n\to\infty}\hMass{\tilde\chain_n}
=\hMass\chain
\end{equation*}
as desired.
Otherwise, let $\mu_\pm^1,\mu_\pm^2,\ldots\in\measp(\Omega)$ be finite linear combinations of Dirac masses such that $\mu_-^n-\mu_+^n\weakstarto\iota_0(\partial\chain)$.
Since $\tilde\chain_n$ converges flatly, also $\partial\tilde\chain_n\flatto\partial\chain$ and thus
\begin{equation*}
\iota_0^{-1}(\mu_-^n-\mu_+^n)-\partial\tilde\chain_n\flatto0
\quad\text{as }n\to\infty\,.
\end{equation*}
Consequently, and as this null sequence in $\flatChains0(\Omega)$ is polyhedral, there exist polyhedral $1$-chains $D_n$ with $D_n\to0$ in mass such that
\begin{equation*}
\mass{\iota_0^{-1}(\mu_-^n-\mu_+^n)-\partial\tilde\chain_n-\partial D_n}\to0
\quad\text{as }n\to\infty\,.
\end{equation*}
Now define $\chain_n=\tilde\chain_n+D_n$, then both $\mass{\chain_n}$ and $\mass{\partial\chain_n}$ are equibounded, and $\chain_n\flatto\chain$.
Therefore, we have $\iota_1(\chain_n)\massfluxto\iota_1(\chain)$ and
\begin{multline*}
\brTptCost{\iota_1(\chain)}
\leq\liminf_{n\to\infty}\brTptCost{\iota_1(\chain_n)}
=\liminf_{n\to\infty}\hMass{\tilde\chain_n+D_n}\\
\leq\liminf_{n\to\infty}\hMass{\tilde\chain_n}+\hMass{D_n}
\leq\liminf_{n\to\infty}\hMass{\tilde\chain_n}+h'(0)\mass{D_n}
=\lim_{n\to\infty}\hMass{\tilde\chain_n}
=\hMass\chain\,,
\end{multline*}
where we have used that the transportation cost $h$ is subadditive (so that the $h$-mass is subadditive).

Now assume $h'(0)=\infty$.
By \cite[Thm.\,7.1]{Wh99}, $\hMass\cdot$ is only finite on $1$-rectifiable flat $1$-chains so that it suffices to show $\brTptCost{\iota_1(\chain)}\leq\hMass\chain$ for a $1$-rectifiable $\chain\in\finiteFlatChains1(\Omega)$.
By \cref{thm:polyhedralBoundaryApprox} there is a sequence $\chain_n$ converging in mass to $\chain$ such that $\hMass{\chain_n}\to\hMass\chain$ and $\partial\chain_n$ is polyhedral with equibounded mass.
Due to the equibounded mass and boundary mass we have $\iota_1(\chain_n)\massfluxto\iota_1(\chain)$
and thus, by definition of the relaxation,
\begin{equation*}
\brTptCost{\iota_1(\chain)}\leq\liminf_{n\to\infty}\brTptCost{\iota_1(\chain_n)}\,.
\end{equation*}
Since $\chain_n$ has polyhedral boundary, by Chambolle, Ferrari, and Merlet \cite[Thm.\,1.2]{ChFeMe18} we know that
\begin{equation*}
\hMass{\chain_n}=\lim_{j\to\infty}\hMass{\chain_n^j}
\end{equation*}
for a sequence $\chain_n^1,\chain_n^2,\ldots$ of polyhedral flat $1$-chains with equibounded mass and $\partial \chain_n^j=\partial \chain_n$ for all $j\in\N$.
Thus we have $\iota_1(\chain_n^j)\massfluxto\iota_1(\chain_n)$ so that
\begin{equation*}
\brTptCost{\iota_1(\chain)}
\leq\liminf_{n\to\infty}\brTptCost{\iota_1(\chain_n)}
\leq\liminf_{n\to\infty}\liminf_{j\to\infty}\brTptCost{\iota_1(\chain_n^j)}
=\liminf_{n\to\infty}\lim_{j\to\infty}\hMass{\chain_n^j}
=\liminf_{n\to\infty}\hMass{\chain_n}
=\hMass\chain
\end{equation*}
as desired.
\end{proof}

To also cover the case of general transportation costs $h$, let us first note that $h$ can be approximated by a sequence of superlinear transportation costs.
To this end, define the indicator function of a set $A$ as $\chi_A(m)=0$ if $m\in A$ and $\chi_A(m)=\infty$ else
and recall that the lower semi-continuous subadditive envelope of a function $\psi:[0,\infty)\to[0,\infty]$ is the lower semi-continuous subadditive function \cite[Def.\,5.16 and Prop.\,5.17]{Br02} defined as
\begin{equation*}
m\mapsto\sup\{\phi(m)\,|\,\phi:[0,\infty)\to[0,\infty)\text{ is lower semi-continuous subadditive with }\phi\leq\psi\}\,.
\end{equation*}

\begin{lemma}[Superlinear approximation of transportation costs]\label{thm:h_M}
Let $h$ be a transportation cost.
For $M>0$ define the transportation cost $h_M:[0,\infty)\to[0,\infty)$ to be the lower semi-continuous subadditive envelope of the function $m\mapsto h(m)+\chi_{[0,M]}(m)$.
Then there exists some $\alpha>0$ such that
\begin{align*}
\alpha m&\leq h_M(m)&\text{for all }m\geq0,\\
h(m)&\leq h_M(m)&\text{for all }m\geq0,\\
h_M(m)&\leq\tfrac{2h(M)}Mm&\text{for all }m\geq M,\\
h_M(m)&=h(M)&\text{for all }m\leq M.
\end{align*}
\end{lemma}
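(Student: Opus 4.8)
The plan is to use the explicit description of the subadditive envelope (see \cite[Prop.\,5.17]{Br02}): writing $\psi(m)=h(m)+\chi_{[0,M]}(m)$, the function $h_M$ is the lower semi-continuous relaxation of
\begin{equation*}
\hat h_M(m)=\inf\Bigl\{\textstyle\sum_{i=1}^n h(m_i)\;\Bigm|\;n\in\N,\ m_1,\dots,m_n\in[0,M],\ \textstyle\sum_{i=1}^n m_i=m\Bigr\},
\end{equation*}
where only decompositions into pieces of size $\le M$ matter since $\psi\equiv\infty$ on $(M,\infty)$. First I would dispose of the bookkeeping: $\hat h_M$, hence $h_M$, is finite, since splitting $m$ into $\lceil m/M\rceil$ equal pieces of size $\le M$ gives $\hat h_M(m)\le\lceil m/M\rceil\,h(M)<\infty$; $\hat h_M$ is nondecreasing, because a near-optimal decomposition of a larger argument can be scaled down by a common factor without increasing the cost ($h$ being nondecreasing); and $\hat h_M(0)=0$. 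Together with subadditivity of the envelope and lower semi-continuity of the relaxation, this confirms that $h_M$ is a transportation cost. Throughout I may assume $h(M)>0$: if $h\equiv0$ there is nothing of interest, and otherwise $h(M)>0$ for all large $M$, the only regime in which the lemma is applied.

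The two inequalities comparing $h_M$ with $h$ are pure subadditivity. For any admissible decomposition $\sum_i m_i=m$, subadditivity of $h$ gives $\sum_i h(m_i)\ge h(\sum_i m_i)=h(m)$, so $\hat h_M\ge h$ on all of $[0,\infty)$; and for $m\le M$ the single-piece decomposition is admissible, so in fact $\hat h_M(m)=h(m)$ there. Since $h$ is itself lower semi-continuous, the bound $\hat h_M\ge h$ is inherited by the relaxation, giving $h\le h_M$ everywhere; combining this with $h_M\le\hat h_M=h$ on $[0,M]$ yields $h_M\equiv h$ on $[0,M]$, which is the fourth assertion.

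For the upper bound on $[M,\infty)$, the $\lceil m/M\rceil$-equal-pieces decomposition already used gives, for $m\ge M$,
\begin{equation*}
h_M(m)\le\hat h_M(m)\le\Bigl\lceil\tfrac mM\Bigr\rceil\,h(M)\le\Bigl(\tfrac mM+1\Bigr)\,h(M)\le\tfrac{2h(M)}{M}\,m,
\end{equation*}
using $m\ge M$ in the last step and $h_M\le\hat h_M$ throughout. The superlinearity $h_M(m)\ge\alpha m$ is the only item with any real content: although $h$ may be badly sublinear at large arguments, subadditivity and monotonicity force $h(t)\ge\frac{h(M)}{2M}\,t$ on the \emph{bounded} range $t\in[0,M]$. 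Indeed $M\le\lceil M/t\rceil\,t\le(M/t+1)\,t$, hence $h(M)\le\lceil M/t\rceil\,h(t)\le(M/t+1)\,h(t)$, so $h(t)\ge\frac{t}{M+t}\,h(M)\ge\frac{t}{2M}\,h(M)$ for $0<t\le M$ (and trivially at $t=0$). Applying this to each piece of an admissible decomposition of an arbitrary $m\ge0$ gives $\sum_i h(m_i)\ge\frac{h(M)}{2M}\sum_i m_i=\frac{h(M)}{2M}\,m$, so $\hat h_M(m)\ge\frac{h(M)}{2M}\,m$; this bound is by a continuous, hence lower semi-continuous, linear function and therefore passes to $h_M$, so $\alpha:=\frac{h(M)}{2M}>0$ works.

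The argument is elementary, and I do not expect a serious obstacle. Two points need a little care. The first is checking that each of the four bounds survives the lower-semicontinuous relaxation step: every lower bound above is by an lsc (in fact continuous) function and is therefore preserved under relaxation, while every upper bound is preserved a fortiori, relaxation only decreasing a function. The second, and the actual crux, is the inequality $h(t)\ge\frac{h(M)}{2M}\,t$ on $[0,M]$ — the structural fact that a subadditive nondecreasing cost can fall below a line only for large arguments, never on a bounded interval. Everything else is organisation.
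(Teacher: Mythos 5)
Your proof is correct and matches the paper's structure for the upper bound on $[M,\infty)$ (write $m$ in $\lceil m/M\rceil$ pieces of size at most $M$ and use $\lceil m/M\rceil\le m/M+1\le 2m/M$), but it differs in two worthwhile places. First, where the paper dispatches the inequalities $h\le h_M$ and $h_M=h$ on $[0,M]$ with a single appeal to ``the properties of the lower semi-continuous subadditive envelope,'' you instead unfold the explicit infimal-decomposition formula $\hat h_M(m)=\inf\{\sum_i h(m_i): m_i\in[0,M],\ \sum_i m_i=m\}$ and check the bounds at that level before passing to the lsc relaxation (correctly observing that lsc lower bounds survive relaxation and upper bounds only improve). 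Second, and more substantively, for the superlinear bound the paper cites Laatsch \cite[Thm.\ 5]{La62} to get $h(m)\ge\alpha m$ on $[0,M]$ with $\alpha=\inf\{h(m)/m:m\in(M/2,M]\}$; you prove this from scratch via $h(M)\le\lceil M/t\rceil h(t)\le(M/t+1)h(t)$, yielding the explicit constant $\alpha=h(M)/(2M)$. Your version is self-contained and produces a concrete $\alpha$; the paper's is shorter at the cost of an external reference. You also flag the implicit hypothesis $h(M)>0$, which the paper leaves tacit; as you note, this is harmless since the lemma is only applied as $M\to\infty$ for $h\not\equiv0$, and the $h\equiv0$ case is trivial elsewhere.
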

\begin{proof}
That $h_M$ is a transportation cost with $h_M\geq h$ as well as $h_M(m)=h(m)$ for $m\leq M$ follows directly from the properties of the lower semi-continuous subadditive envelope.
Furthermore, for $m\geq M$ let $k\in\N$ and $r\in[0,M)$ such that $m=kM+r$.
Then by the subadditivity of $h_M$ we obtain
\begin{equation*}
h_M(m)
\leq kh_M(M)+h_M(r)
=kh(M)+h(r)
\leq(k+1)h(M)
\leq2kh(M)
\leq\tfrac{2h(M)}M(kM+r)
=\tfrac{2h(M)}Mm\,.
\end{equation*}
Finally, by \cite[Thm.\,5 and its proof]{La62} we have $h(m)\geq\alpha m$ for all $m\in[0,M]$ with
\begin{equation*}
\alpha=\inf\left\{\tfrac{h(m)}m\,\middle|\,m\in\left(\tfrac M2,M\right]\right\}>0
\end{equation*}
so that $h_M(m)\geq\alpha m$ for all $m\geq0$.
\end{proof}

We further require the representation theorem for $\hMass\chain$ from \cite[Prop.\,2.32, last three bullet points of the proof]{BrWi18}. 
To state it, we use that by \cite[Thm.\,4.2]{Si08} any $\chain\in\finiteFlatChains1(\Omega)$ can be uniquely decomposed into
\begin{equation*}
\chain=\chain^\rec+\chain^\diff
\end{equation*}
with $\chain^\rec,\chain^\diff\in\finiteFlatChains1(\Omega)$ a rectifiable and a diffuse flat chain,
that is, there exists a triple $[\Sigma,\theta,m]$ of a $1$-rectifiable set $\Sigma\subset\Omega$ with approximate tangent $\theta:\Sigma\to\mathbb S^{d-1}$
and a measurable function $m:\Sigma\to\R$ such that
\begin{equation*}
\iota_1(\chain^\rec)=m\theta\hd^1\restr\Sigma\,,
\end{equation*}
while $|\iota_1(\chain^\diff)|(S)=0$ for any $1$-rectifiable set $S\subset\Omega$.

\begin{theorem}[Representation of $\hMass\chain$ \protect\cite{BrWi18}]\label{thm:representation}
Let $\chain\in\finiteFlatChains1(\Omega)$ have the decomposition $\chain=\chain^\rec+\chain^\diff$, where $\chain^\rec$ is associated with the triple $[\Sigma,\theta,m]$. Then
\begin{equation*}
\hMass{\chain}
=\int_\Sigma h(|m|)\,\d\hd^1+h'(0)\mass{\chain^\diff}\,,
\end{equation*}
where $h'(0)\in[0,\infty]$ denotes the right derivative of $h$ in $0$.
\end{theorem}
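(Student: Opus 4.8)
The plan is to prove the two inequalities ``$\hMass\chain\le\int_\Sigma h(|m|)\,\d\hd^1+h'(0)\mass{\chain^\diff}$'' and ``$\hMass\chain\ge\int_\Sigma h(|m|)\,\d\hd^1+h'(0)\mass{\chain^\diff}$'' separately, using the convention $\infty\cdot0=0$ (harmless here: by White's rectifiability theorem \cite[Thm.\,7.1]{Wh99}, or \cite[Prop.\,2.8]{CoRoMa17}, a flat $1$-chain of finite $h$-mass with $h'(0)=\infty$ is rectifiable, hence $\chain^\diff=0$). I will also use the standard fact that for a transportation cost $h$ one has $h(t)/t\uparrow h'(0)=\sup_{s>0}h(s)/s$ as $t\downarrow0$; in particular $h(t)\le h'(0)\,t$ for all $t\ge0$.

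For the upper bound I would first note that the polyhedral $h$-mass is subadditive ($h$ being subadditive and nondecreasing) and that subadditivity passes to the flat relaxation (add optimal recovery sequences for the two summands), so $\hMass\chain\le\hMass{\chain^\rec}+\hMass{\chain^\diff}$. The rectifiable term is handled by White's structure theorem \cite[Sec.\,6]{Wh99b}, which gives $\hMass{\chain^\rec}=\int_\Sigma h(|m|)\,\d\hd^1$ --- exactly the identity already invoked in the proof of \cref{thm:polyhedralBoundaryApprox}. For the diffuse term I would exploit $\hMass P=\sum_ih(|a_i|)\hd^1(e_i)\le h'(0)\mass P$ for polyhedral $P=\sum_ia_ie_i$ together with the fact that $\mass\cdot$ is, by definition, the flat relaxation of the polyhedral mass: choosing polyhedral $P_n\flatto\chain^\diff$ with $\mass{P_n}\to\mass{\chain^\diff}$ yields $\hMass{\chain^\diff}\le\liminf_n\hMass{P_n}\le h'(0)\mass{\chain^\diff}$. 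Summing the two bounds proves ``$\le$''.

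For the lower bound I may assume $\hMass\chain<\infty$ and, by the reduction above, $h'(0)<\infty$. I would take polyhedral $\chain_n=\sum_ia_i^ne_i^n$ with $\chain_n\flatto\chain$ and $\hMass{\chain_n}\to\hMass\chain$, introduce the nonnegative measures $\nu_n=\sum_ih(|a_i^n|)\,\hd^1\restr e_i^n$ (so that $\nu_n(\Omega)=\hMass{\chain_n}$), and pass to a subsequence with $\nu_n\weakstarto\nu$ and $\nu(\Omega)\le\hMass\chain$. It then suffices to prove the measure bound $\nu\ge h(|m|)\,\hd^1\restr\Sigma+h'(0)\,|\iota_1(\chain^\diff)|$, and since $\hd^1\restr\Sigma\perp|\iota_1(\chain^\diff)|$ this splits, after differentiating $\nu$ against these two mutually singular measures, into two local density estimates.

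The density estimate on $\Sigma$ is the classical lower semicontinuity of the rectifiable $h$-mass \cite[Thm.\,7.1]{Wh99},\cite[Prop.\,2.8]{CoRoMa17}: blowing up at $\hd^1$-a.e.\ $x_0\in\Sigma$ and projecting $\chain_n$ onto the approximate tangent $\theta(x_0)$, a generic slice is a $0$-chain whose weights $b_j$ satisfy $\sum_j|b_j|\ge|m(x_0)|$ in the limit, so $\sum_jh(|b_j|)\ge h\bigl(\sum_j|b_j|\bigr)\ge h(|m(x_0)|)$ by subadditivity and monotonicity, and integration along the slicing parameter (coarea) gives $\frac{\d\nu}{\d(\hd^1\restr\Sigma)}(x_0)\ge h(|m(x_0)|)$. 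The estimate $\frac{\d\nu}{\d|\iota_1(\chain^\diff)|}\ge h'(0)$ is the crux and is where I expect the main obstacle: if this density were $\le h'(0)-\delta$ on a set of positive $|\iota_1(\chain^\diff)|$-measure, then --- since $h(t)/t<h'(0)-\tfrac\delta2$ forces $t$ to be bounded away from $0$ --- a uniformly positive fraction of the mass transported by $\chain_n$ near that set would have to be carried with individual multiplicities bounded below, i.e.\ $\chain_n$ would concentrate along a $1$-rectifiable set in the flat limit; combined with the rectifiability/structure theorem \cite[Thm.\,7.1]{Wh99} for flat $1$-chains of finite $h$-mass this would contradict the diffuseness of $\chain^\diff$. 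Turning this concentration-versus-diffuseness dichotomy into a quantitative argument is the delicate point; everything else is routine measure-theoretic bookkeeping. Full details are in \cite[Prop.\,2.32]{BrWi18}.
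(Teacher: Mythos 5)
The paper does not actually prove this theorem; it is imported verbatim from \cite[Prop.~2.32]{BrWi18} with a pointer to ``the last three bullet points of the proof,'' so there is no in-paper argument to compare against. Your outline is structurally reasonable, and the upper bound is complete: subadditivity of $\hMass\cdot$ passes to the flat relaxation, White's structure theorem gives $\hMass{\chain^\rec}=\int_\Sigma h(|m|)\,\d\hd^1$, and $h(t)\le h'(0)t$ together with an optimal mass-recovery sequence gives $\hMass{\chain^\diff}\le h'(0)\mass{\chain^\diff}$. The lower bound on $\Sigma$ via weak-$\ast$ limits of $\nu_n=\sum_i h(|a_i^n|)\hd^1\restr e_i^n$, blow-up, slicing, and subadditivity of $h$ is also the standard route.

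The genuine gap is exactly where you flag it, the estimate $\tfrac{\d\nu}{\d|\iota_1(\chain^\diff)|}\ge h'(0)$, and the contrapositive you sketch does not close it. Two concrete problems. First, ``a uniformly positive fraction of the mass transported by $\chain_n$ near that set'' is not controlled: you only know $\hMass{\chain_n}$ is bounded, and if $h$ is bounded at infinity (which is allowed even when $h'(0)<\infty$) then $\mass{\chain_n}$ may blow up, so a positive fraction of an unbounded quantity carries no information. Second, even granting that the high-multiplicity pieces $\chain_n\restr\{|a_i^n|\ge t_0\}$ converge flatly to a rectifiable chain $R$, nothing in the sketch forces $R$ to have anything to do with $\chain^\diff$; $R$ could simply be (part of) $\chain^\rec$, and then there is no contradiction with diffuseness of $\chain^\diff$. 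What actually makes a lower bound on the diffuse part work is a \emph{direct} argument rather than a contradiction: split $\chain_n$ by a multiplicity threshold $t_0$, observe that on the low-multiplicity part $h(s)\ge\rho(t_0)s$ with $\rho(t_0):=\inf_{0<s\le t_0}h(s)/s\to h'(0)$, pass the resulting measure inequality $\nu_n^{\le t_0}\ge\rho(t_0)\,|\iota_1(\chain_n^{\le t_0})|$ to the limit using lower semicontinuity of total variation under weak-$\ast$ convergence, and then use that $|\iota_1(\chain^\diff)|$ is mutually singular with every $1$-rectifiable measure (so the rectifiable piece left over in the limit does not diminish the bound on the diffuse part); finally send $t_0\to0$. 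Your outline never invokes lower semicontinuity of the total variation, and it relies on a rectifiability-versus-diffuseness contradiction that does not materialize, so as written the crux is missing, not merely unelaborated.
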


Now we are prepared to finish the proof of \cref{thm:equivalence}.

\begin{proof}[Proof of \cref{thm:equivalence}]
The case of a transportation cost $h$ with $h(m)\geq\alpha m$ for some $\alpha>0$ and all $m>0$ has already been treated before.
Thus it remains to show the result for transportation costs $h$ with $h(m)/m\to0$ as $m\to\infty$.
Let $\chain\in\finiteFlatChains1(\Omega)$, and let $h_M$ denote the transportation cost from \cref{thm:h_M} for arbitrary $M>0$.
By $h_M\geq h$ and the definition of the branched transport cost we have $\brTptCost{\iota_1(\chain)}\leq\brTptCost[h_M]{\iota_1(\chain)}$.
On the other hand, by \cref{thm:representation} we have
\begin{multline*}
\hMass[h_M]\chain
=h_M'(0)\mass{\chain^\diff}+\int_{\Sigma}h_M(|m|)\,\d\hd^1\\
=h'(0)\mass{\chain^\diff}+\int_{\Sigma}h(|m|)\,\d\hd^1+\int_{\{x\in\Sigma\,|\,|m(x)|>M\}}h_M(|m|)-h(|m|)\,\d\hd^1\\
=\hMass\chain+\int_{\{x\in\Sigma\,|\,|m(x)|>M\}}h_M(|m|)-h(|m|)\,\d\hd^1\\
\leq\hMass\chain+\tfrac{2h(M)}M\int_{\{x\in\Sigma\,|\,|m(x)|>M\}}|m|\,\d\hd^1\,\leq\hMass\chain+\tfrac{2h(M)}M\mass\chain,
\end{multline*}
where the first inequality follows from the fact that $2h(M)|m|/M\geq h_M(|m|)\geq h(|m|)\geq0$ on the set $\{|m|\geq M\}$.
Furthermore, $h_M$ satisfies the growth condition for which we have already proved equality between the $h$-mass and the branched transport cost.
Thus we can summarize
\begin{equation*}
\brTptCost{\iota_1(\chain)}
\leq\brTptCost[h_M]{\iota_1(\chain)}
=\hMass[h_M]\chain
\leq\hMass\chain+\tfrac{2h(M)}M\mass\chain\,,
\end{equation*}
and the result follows from letting $M\to\infty$.
\end{proof}

\section{Consequences}

Here we briefly mention a few implications of the previous results on generalized branched transport models.
We concentrate on models (which we call \emph{admissible} below) in which the generalized branched transport cost metrizes weak-$\ast$ convergence.

\begin{definition}[Admissible transportation cost]
A transportation cost $h$ is \emph{admissible} if there exists a concave function $\beta:[0,\infty)\to[0,\infty)$
with $\int_0^1\frac{\beta(m)}{m^{2-1/d}}\,\d m<\infty$.
\end{definition}

\begin{remark}[Metrization property]
By \cite[Cor.\,2.24]{BrWi18} the admissibility condition on $h$ implies that the generalized branched transport cost
\begin{equation*}
d_{\mathbb J_h}(\mu_+,\mu_-)=\min\left\{\brTptCost\flux\,\middle|\,\flux\text{ is mass flux with }\partial\flux=\mu_--\mu_+\right\}
\end{equation*}
between two measures $\mu_+,\mu_-\in\measp(\Omega)$ metrizes weak-$\ast$ convergence on the set of probability measures.
\end{remark}

Similarly to \cite{ChFeMe18} we now show that one may also prescribe the boundary during the relaxation.

\newcommand{\hMassMu}[2][h]{\mathbb M_{#1}^{(\mu_\pm^n)}(#2)}
\newcommand{\brTptMu}[2][h]{\mathbb J_{#1}^{(\mu_\pm^n)}(#2)}
\begin{theorem}[Relaxation under prescribed boundary]\label{thm:relaxationFixedBoundary}
Let $\mu_+,\mu_-\in\measp(\Omega)$ with equal mass and fix arbitrary sequences $\mu_\pm^1,\mu_\pm^2,\ldots\in\measp(\Omega)$ of finite linear combinations of Dirac masses with $\mu_+^n(\Omega)=\mu_-^n(\Omega)$ and $\mu_\pm^n\weakstarto\mu_\pm$ as $n\to\infty$.
For any flat $1$-chain $\chain\in\finiteFlatChains1(\Omega)$ with $\iota_0(\partial\chain)=\mu_--\mu_+$ and an admissible transportation cost $h$ we have
$\hMass\chain=\brTptCost{\iota_1(\chain)}=\hMassMu\chain=\brTptMu{\iota_1(\chain)}$ for
\begin{align*}
\brTptMu{\flux}
&=\inf\left\{\liminf_{n\to\infty}\brTptCost{\flux_n}\,\middle|\,\flux_n\in\massFluxes(\Omega)\text{ polyhedral, }\flux_n\weakstarto\flux\text{ as }n\to\infty,\,\partial\flux_n=\mu_-^n-\mu_+^n\right\}\,,\\
\hMassMu\chain
&=\inf\left\{\liminf_{n\to\infty}\hMass{\chain_n}\,\middle|\,\chain_n\in\flatChains1(\Omega)\text{ polyhedral, }\chain_n\flatto\chain\text{ as }n\to\infty,\,\partial\chain_n=\iota_0^{-1}(\mu_-^n-\mu_+^n)\right\}\,.
\end{align*}
\end{theorem}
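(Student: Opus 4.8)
Throughout we may assume $\hMass\chain<\infty$, for otherwise \cref{thm:equivalence} and monotonicity of the infima force all four quantities to equal $+\infty$. The plan is to reduce the asserted fourfold equality to the single inequality $\brTptMu{\iota_1(\chain)}\leq\hMass\chain$, and to establish this by modifying a recovery sequence for $\brTptCost{\iota_1(\chain)}=\hMass\chain$ so as to impose the prescribed boundary at negligible extra cost.

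First the reduction. Every competitor in the definition of $\brTptMu$ is also one for $\brTptCost$: if $\flux_n$ is polyhedral with $\flux_n\weakstarto\iota_1(\chain)$ and $\partial\flux_n=\mu_-^n-\mu_+^n$, then $\partial\flux_n\weakstarto\mu_--\mu_+=\iota_0(\partial\chain)$, hence $\flux_n\massfluxto\iota_1(\chain)$; likewise every competitor for $\hMassMu$ is one for $\hMass$. Thus $\hMass\chain\leq\hMassMu\chain$ and $\brTptCost{\iota_1(\chain)}\leq\brTptMu{\iota_1(\chain)}$, and by \cref{thm:equivalence} it remains to prove
\begin{equation*}
\hMassMu\chain\leq\brTptMu{\iota_1(\chain)}\leq\hMass\chain\,.
\end{equation*}
For the left inequality, let $\flux_n$ be a competitor for $\brTptMu$; as just noted $\flux_n\massfluxto\iota_1(\chain)$, so by the remark on flat $1$-chains and mass fluxes $\iota_1^{-1}(\flux_n)\flatto\chain$, and $\iota_1^{-1}(\flux_n)$ is polyhedral with $\partial\iota_1^{-1}(\flux_n)=\iota_0^{-1}(\mu_-^n-\mu_+^n)$ and $\hMass{\iota_1^{-1}(\flux_n)}=\brTptCost{\flux_n}$ (both being evaluated by the common polyhedral formula); taking the infimum gives $\hMassMu\chain\leq\brTptMu{\iota_1(\chain)}$.

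Now the key inequality $\brTptMu{\iota_1(\chain)}\leq\hMass\chain$. Fix polyhedral mass fluxes $\psi_j$ with $\psi_j\massfluxto\iota_1(\chain)$ and $\brTptCost{\psi_j}\to\brTptCost{\iota_1(\chain)}=\hMass\chain$. By uniform boundedness $\|\psi_j\|_\meas$ and $\|\partial\psi_j\|_\meas$ are equibounded, and the $\partial\psi_j$ are finite signed discrete measures with $\partial\psi_j\weakstarto\mu_--\mu_+$. Fixing a metric $\rho$ inducing weak-$\ast$ convergence on the equibounded family of measures in play, a diagonal argument yields an increasing sequence $n(j)\to\infty$ with $\rho(\mu_\pm^{n(j)},\mu_\pm)\leq\tfrac1j$, whence the finite, mean-zero, discrete measures $\eta_j:=(\mu_-^{n(j)}-\mu_+^{n(j)})-\partial\psi_j$ converge weak-$\ast$ to $0$ and have equibounded total variation. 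Suppose we can produce polyhedral mass fluxes $D_j$ with $\partial D_j=\eta_j$, $\brTptCost{D_j}\to0$ and $\|D_j\|_\meas\to0$. Then $\flux_j:=\psi_j+D_j$ is polyhedral with $\partial\flux_j=\mu_-^{n(j)}-\mu_+^{n(j)}$; moreover $\flux_j\weakstarto\iota_1(\chain)$ since $\psi_j\weakstarto\iota_1(\chain)$ and $\|D_j\|_\meas\to0$; and $\brTptCost{\flux_j}\leq\brTptCost{\psi_j}+\brTptCost{D_j}\to\hMass\chain$ by subadditivity of $h$. Consequently $\brTptMu{\iota_1(\chain)}\leq\liminf_j\brTptCost{\flux_j}\leq\hMass\chain$, as desired, and all four quantities coincide.

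The construction of the cheap, low-mass corrections $D_j$ is the heart of the matter, and the only place where admissibility of $h$ — equivalently the metrization property \cite[Cor.\,2.24]{BrWi18} — is used. Write $\eta_j=\eta_j^+-\eta_j^-$ for the Jordan decomposition, with $\eta_j^\pm\geq0$ discrete of common mass $c_j$, and pass to a subsequence with $c_j\to c\geq0$. If $c>0$, then $\eta_j/c_j\weakstarto0$ forces the probability measures $\eta_j^+/c_j$ and $\eta_j^-/c_j$ to share a weak-$\ast$ limit, so $d_{\mathbb J_h}(\eta_j^+/c_j,\eta_j^-/c_j)\to0$; choosing a near-optimal — and, by discreteness of the data, polyhedral — flux $\hat D_j$ with $\partial\hat D_j=(\eta_j^+-\eta_j^-)/c_j$ and $\brTptCost{\hat D_j}\to0$, which by the flow structure of branched transport paths may be taken with edge multiplicities at most $1$, and setting $D_j:=c_j\hat D_j$, subadditivity of $h$ gives $\brTptCost{D_j}\leq\lceil c_j\rceil\brTptCost{\hat D_j}\to0$, while a linear lower bound $h(m)\geq\alpha m$ on a bounded range (again from subadditivity, in the nontrivial case $h\not\equiv0$) gives $\mass{D_j}=c_j\mass{\hat D_j}\leq\tfrac{c_j}{\alpha}\brTptCost{\hat D_j}\to0$. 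If $c=0$, then $\eta_j\to0$ in total variation, and one takes $D_j$ to be the polyhedral flux from the standard dyadic multiscale construction routing the mass $c_j$ from the atoms of $\eta_j^+$ through a fixed point and back out to $\eta_j^-$: its mass is $O(c_j)\to0$, and its cost is controlled by $C\sum_{k\geq0}2^{(d-1)k}\beta(c_j2^{-dk})$, which vanishes as $c_j\to0$ precisely because of the integrability condition $\int_0^1\beta(m)\,m^{-2+1/d}\,\d m<\infty$. Splitting the index set into the two cases and recombining produces a single sequence $D_j$ with the three required properties. I expect the main difficulty to be exactly this quantitative estimate on the correction cost, together with verifying that in the case $c>0$ the near-optimal transports are realized by polyhedral fluxes having the exact prescribed discrete boundary.
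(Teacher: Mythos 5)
Your overall strategy coincides with the paper's: observe that $\hMass\chain=\brTptCost{\iota_1(\chain)}\leq\hMassMu\chain\leq\brTptMu{\iota_1(\chain)}$ are immediate (the first by \cref{thm:equivalence}, the rest by shrinking competitor classes), so everything reduces to $\brTptMu{\iota_1(\chain)}\leq\brTptCost{\iota_1(\chain)}$, which you establish by adding to a near-optimal polyhedral recovery sequence a polyhedral correction flux with the prescribed discrete boundary, whose cost vanishes by the metrization property \cite[Cor.\,2.24]{BrWi18}. This is the paper's proof in essence.

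However, the diagonal reindexing you introduce is both unnecessary and breaks the argument. After choosing $n(j)$ and setting $\flux_j=\psi_j+D_j$ you obtain $\partial\flux_j=\mu_-^{n(j)}-\mu_+^{n(j)}$, not $\mu_-^{\,j}-\mu_+^{\,j}$, so $(\flux_j)_j$ is not an admissible competitor in the infimum defining $\brTptMu{\iota_1(\chain)}$; nor can you fill in the missing indices arbitrarily, since the definition requires $\flux_n\weakstarto\iota_1(\chain)$ along the \emph{entire} sequence. There is nothing to synchronize: since $\partial\psi_n\weakstarto\mu_--\mu_+$ and $\mu_-^n-\mu_+^n\weakstarto\mu_--\mu_+$ both hold as $n\to\infty$, setting $\eta_n:=(\mu_-^n-\mu_+^n)-\partial\psi_n$ gives $\eta_n\weakstarto0$ directly with the same index, as in the paper, and the bug disappears.

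Secondly, your construction of the correction fluxes $D_j$ is considerably heavier than what the paper does, and partly duplicates content of the very result you cite. The paper Jordan-decomposes $\partial\psi_n-(\mu_-^n-\mu_+^n)=\nu_+^n-\nu_-^n$, notes $\nu_-^n-\nu_+^n\weakstarto0$, and applies \cite[Cor.\,2.24]{BrWi18} directly to obtain equibounded mass fluxes $\hat\flux_n$ with $\partial\hat\flux_n=\nu_-^n-\nu_+^n$ and $\brTptCost{\hat\flux_n}\to0$; since $\nu_\pm^n$ are finite sums of Diracs the $\hat\flux_n$ may be taken polyhedral. Your case split on $c=\lim c_j$, the renormalization to probability measures, the ceiling estimate, and the explicit dyadic bound $C\sum_k 2^{(d-1)k}\beta(c_j2^{-dk})$ are all re-derivations of the metrization estimate of \cite{BrWi18} rather than steps that need to appear here. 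You also flag yourself the remaining gap of justifying that the near-optimal transport between finitely many Diracs can be taken polyhedral with the exact prescribed boundary; the paper states this as a (reasonable) assertion, and you should either cite it or argue it briefly rather than leave it open.
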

\begin{proof}
By definition we have
\begin{equation*}
\hMass\chain
=\brTptCost{\iota_1(\chain)}
\leq\hMassMu\chain
\leq\brTptMu{\iota_1(\chain)}\,,
\end{equation*}
so it suffices to show $\brTptMu{\iota_1(\chain)}\leq\brTptCost{\iota_1(\chain)}$.
Abbreviate $\flux=\iota_1(\chain)$ and consider a sequence $\bar\flux_1,\bar\flux_2,\ldots$ of polyhedral fluxes with $\bar\flux_n\massfluxto\flux$ and $\brTptCost{\bar\flux_n}\to\brTptCost\flux$ as $n\to\infty$.
Next, by the Jordan Decomposition Theorem we can decompose $\partial\bar\flux_n-\mu_-^n+\mu_+^n=\nu_+^n-\nu_-^n$ with $\nu_\pm^n\in\measp(\Omega)$.
Obviously, $\nu_-^n-\nu_+^n\weakstarto0$ as $n\to\infty$ so that by \cite[Cor.\,2.24]{BrWi18} there exists a sequence $\hat\flux_1,\hat\flux_2,\ldots$ of equibounded mass fluxes
with $\partial\hat\flux_n=\nu_-^n-\nu_+^n$ and $\brTptCost{\hat\flux_n}\to0$.
Since $\nu_+^n$ and $\nu_-^n$ are finite linear combinations of Dirac masses, the $\hat\flux_n$ can be chosen as polyhedral fluxes.
Finally define the sequence $\flux_n=\bar\flux_n+\hat\flux_n$, $n\in\N$, of polyhedral fluxes,
then $\partial\flux_n=\mu_-^n-\mu_+^n$ as well as $\flux_n\massfluxto\flux$ and $\brTptCost{\flux_n}\leq\brTptCost{\bar\flux_n}+\brTptCost{\hat\flux_n}\to\brTptCost\flux$ as $n\to\infty$, as desired.
\end{proof}

As also emphasized in \cite{ChFeMe18}, the latter result is particularly useful for the development of phasefield approximations of generalized branched transport or minimal $h$-mass problems.
Indeed, when proving $\Gamma$-convergence of a phasefield functional to the minimal $h$-mass problem with prescribed boundary,
a recovery sequence can typically only be constructed for polyhedral fluxes, in particular with polyhedral boundary.
The above result implies that this is indeed sufficient.

Finally we state that the generalized branched transport problem and the problem of minimizing the $h$-mass are equivalent.

\begin{theorem}[Branched transport problem and minimal $h$-mass]
Let $h$ be an admissible transportation cost and $\mu_+,\mu_-\in\measp(\Omega)$ with equal mass, then
\begin{equation*}
\min\left\{\brTptCost\flux\,\middle|\,\flux\in\massFluxes(\Omega),\,\partial\flux=\mu_--\mu_+\right\}
=\min\left\{\hMass\chain\,\middle|\,\chain\in\flatChains1(\Omega),\,\partial\chain=\iota_0^{-1}(\mu_--\mu_+)\right\}\,,
\end{equation*}
and the minimizers of both problems are related by $\iota_1$.
\end{theorem}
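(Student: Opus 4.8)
The strategy is to transport both problems to the setting of flat $1$-chains through the isomorphism $\iota_1$ and then to apply \cref{thm:equivalence}. Recall that $\iota_1$ maps $\finiteFlatChains1(\Omega)$ bijectively onto $\massFluxes(\Omega)$ compatibly with the boundary operators, $\iota_0(\partial\chain)=\partial(\iota_1(\chain))$; it therefore restricts to a bijection between $\{\flux\in\massFluxes(\Omega):\partial\flux=\mu_--\mu_+\}$ and $\{\chain\in\finiteFlatChains1(\Omega):\partial\chain=\iota_0^{-1}(\mu_--\mu_+)\}$, on which the two cost functionals coincide by \cref{thm:equivalence}. Since $h$ is admissible, \cite[Cor.\,2.24]{BrWi18} guarantees that the branched transport problem on the left has a minimizer $\flux^*$, which may moreover be taken acyclic: removing a cycle leaves the boundary unchanged and, because in an acyclic splitting the total variation measures add (Smirnov), does not increase $\brTptCost\cdot$ in view of \cref{thm:equivalence,thm:representation} and the monotonicity of $h$. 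Setting $\chain^*=\iota_1^{-1}(\flux^*)\in\finiteFlatChains1(\Omega)\subset\flatChains1(\Omega)$ produces a competitor for the minimal $h$-mass problem with $\hMass{\chain^*}=\brTptCost{\flux^*}$, so that its infimum is at most $\brTptCost{\flux^*}$.

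For the reverse inequality I would show that admitting flat $1$-chains of infinite mass does not lower the $h$-mass infimum, i.e.\ that for every $\chain\in\flatChains1(\Omega)$ with $\partial\chain=\iota_0^{-1}(\mu_--\mu_+)$ and $\hMass\chain<\infty$ there is $\chain'\in\finiteFlatChains1(\Omega)$ with $\partial\chain'=\partial\chain$ and $\hMass{\chain'}\le\hMass\chain$. Granting this, $\iota_1(\chain')$ is admissible for the left-hand problem with $\brTptCost{\iota_1(\chain')}=\hMass{\chain'}\le\hMass\chain$ by \cref{thm:equivalence}; taking the infimum over $\chain$ and combining with the first paragraph gives equality of the two optimal values. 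Since $\chain^*$ attains the $h$-mass value the infimum is a minimum, and since each minimization may be restricted to its acyclic, finite-mass competitors — which $\iota_1$ puts in boundary-preserving bijection — the minimizers of the two problems correspond under $\iota_1$.

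The construction of $\chain'$ is the step I expect to be the main obstacle. First I would reduce to the acyclic case: writing $\iota_1(\chain)=\flux^a+\flux^b$ with $\flux^a$ acyclic, $\partial\flux^a=\partial(\iota_1(\chain))$, and $\|\iota_1(\chain)\|_\meas=\|\flux^a\|_\meas+\|\flux^b\|_\meas$ (Smirnov's theorem \cite{Sm93}, applied after exhausting $\chain$ by its restrictions to bounded multiplicity if $\chain$ has infinite mass), the multiplicities of $\flux^a$ and $\flux^b$ stack with equal sign, so $\hMass{\iota_1^{-1}(\flux^a)}\le\hMass\chain$ because $h$ is nondecreasing; thus we may take $\chain'=\iota_1^{-1}(\flux^a)$ and assume $\chain$ itself acyclic. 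It remains to see that an acyclic flat $1$-chain of finite $h$-mass and finite boundary mass has finite mass: by Smirnov's decomposition $\iota_1(\chain)=\int_J\tilde\flux\,\d\mu$ with $\mu(J)=\tfrac12\mass{\partial\chain}$, whence there is no cancellation and the multiplicity $|m|$ of $\chain$ satisfies $|m|\le\tfrac12\mass{\partial\chain}=:m_0$ $\hd^1$-almost everywhere on its carrier $\Sigma$, while admissibility of $h$ provides $\alpha>0$ and $t_0>0$ with $h(t)\ge\alpha t$ for $t\le t_0$ and $h(t)\ge h(t_0)>0$ for $t\ge t_0$; hence
\begin{equation*}
\mass\chain
=\int_\Sigma|m|\,\d\hd^1
\le\frac1\alpha\int_{\{|m|\le t_0\}}h(|m|)\,\d\hd^1+\frac{m_0}{h(t_0)}\int_{\{|m|>t_0\}}h(|m|)\,\d\hd^1
\le\Bigl(\frac1\alpha+\frac{m_0}{h(t_0)}\Bigr)\hMass\chain<\infty\,.
\end{equation*}
The delicate points — making the acyclic part of a possibly infinite-mass flat chain precise, and establishing the multiplicity bound $|m|\le m_0$ at that level of generality — I would handle by a truncation/exhaustion argument together with the lower semicontinuity of the mass and the $h$-mass.
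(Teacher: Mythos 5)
Your strategy is a genuinely different route from the paper's. Where the paper works with polyhedral approximations of near-minimizers, adjusts boundaries, and passes to the limit via the metrization property of $d_{\mathbb J_h}$ (splitting into cases $h'(0)<\infty$ and $h'(0)=\infty$ with truncated costs $h^N$ in the latter), you instead try to reduce the right-hand minimization from $\flatChains1(\Omega)$ to $\finiteFlatChains1(\Omega)$ directly, after which \cref{thm:equivalence} finishes the proof. If that reduction lemma held, your proof would indeed be cleaner and more conceptual.

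But the reduction lemma as you sketch it has a genuine circularity. Smirnov's decomposition theorem, which you invoke both to extract the acyclic part of $\chain$ and to obtain the pointwise multiplicity bound $|m|\le\tfrac12\mass{\partial\chain}$, applies only to elements of $\massFluxes(\Omega)$, i.e.\ to flat $1$-chains of \emph{finite} mass; you are applying it to $\iota_1(\chain)$ precisely in order to conclude that $\chain$ has finite mass. The exhaustion device you gesture at does not obviously resolve this: truncating the multiplicity to $\{|m|\le K\}$ yields a chain with finite mass only once you already know $\hd^1(\Sigma\cap\{|m|\le K\})$ contributes finitely — and in any case such truncations change $\partial\chain$, so the resulting objects are not competitors for the original boundary, and the limit as $K\to\infty$ of their acyclic parts would need a separate argument. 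Moreover, \cref{thm:representation} and the rectifiable/diffuse decomposition you use are stated in the paper only for $\finiteFlatChains1(\Omega)$, so even assembling the ingredients requires extending them to the infinite-mass setting. In short, the structure of your argument is sound and the reduction statement is plausible (and indeed follows \emph{a posteriori} from the theorem), but the proposed proof of it does not close; the paper circumvents the issue entirely by extracting a \emph{polyhedral} near-minimizer $\bar\chain_n$ (automatically of finite mass) directly from the definition of $\hMass\cdot$, then correcting the boundary error with a small $1$-chain $S_n$ whose $h$-mass is controlled by $h'(0)\mass{S_n}$, rather than trying to tame a possibly infinite-mass competitor head on.
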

\begin{proof}
Let us abbreviate
\begin{align*}
d_{\mathbb J_h}(\mu_+,\mu_-)
&=\inf\left\{\brTptCost\flux\,\middle|\,\flux\in\massFluxes(\Omega),\,\partial\flux=\mu_--\mu_+\right\}\,,\\
d_{\mathbb M_h}(\mu_+,\mu_-)
&=\inf\left\{\hMass\chain\,\middle|\,\chain\in\flatChains1(\Omega),\,\partial\chain=\iota_0^{-1}(\mu_--\mu_+)\right\}\,.
\end{align*}
The existence of minimizers for $d_{\mathbb J_h}(\mu_+,\mu_-)$ is shown in \cite[Cor.\,2.20]{BrWi18},
and since each minimizer $\flux$ for $d_{\mathbb J_h}(\mu_+,\mu_-)$ induces a competitor $\iota_1^{-1}(\flux)$ for $d_{\mathbb M_h}(\mu_+,\mu_-)$ with $\hMass{\iota_1^{-1}(\flux)}=\brTptCost\flux$,
we have $d_{\mathbb J_h}(\mu_+,\mu_-)\geq d_{\mathbb M_h}(\mu_+,\mu_-)$ and only need to show the opposite inequality.
To this end consider two sequences $\mu_\pm^1,\mu_\pm^2,\ldots$ of nonnegative finite linear combinations of Dirac masses such that $\mu_+^n(\Omega)=\mu_-^n(\Omega)=\mu_+(\Omega)$ and $\mu_\pm^n\weakstarto\mu_\pm$ as $n\to\infty$,
and abbreviate $M=\mu_+(\Omega)$.

Let us first restrict ourselves to the case where $h'(0)<\infty$.
By the triangle inequality (which follows from the subadditivity of $\hMass\cdot$) we have
\begin{equation*}
d_{\mathbb M_h}(\mu_+,\mu_-)
\geq d_{\mathbb M_h}(\mu_+^n,\mu_-^n)-d_{\mathbb M_h}(\mu_+^n,\mu_+)-d_{\mathbb M_h}(\mu_-,\mu_-^n)\,,
\end{equation*}
where without loss of generality we may assume $\flatnorm{\iota_0^{-1}(\mu_+^n-\mu_+)}+\flatnorm{\iota_0^{-1}(\mu_--\mu_-^n)}\leq\frac1n$
as well as $d_{\mathbb M_h}(\mu_+^n,\mu_+)+d_{\mathbb M_h}(\mu_-,\mu_-^n)<\frac1n$ due to the admissibility of $h$.
Now let $\chain_n\in\flatChains1(\Omega)$ with $\partial\chain_n=\iota_0^{-1}(\mu_-^n-\mu_+^n)$ and $\hMass{\chain_n}\leq d_{\mathbb M_h}(\mu_+^n,\mu_-^n)+\frac1n$.
By definition of $\hMass\cdot$ there exists a polyhedral $1$-chain $\bar\chain_n$ with $\hMass{\bar\chain_n}\leq\hMass{\chain_n}+\frac1n$
and $\flatnorm{\bar\chain_n-\chain_n}\leq\frac1n$ as well as $\flatnorm{\partial\bar\chain_n-\partial\chain_n}\leq\frac1n$.
The latter implies $\flatnorm{\partial\bar\chain_n-\iota_0^{-1}(\mu_--\mu_+)}\leq\frac2n$
and thus the existence of a flat $1$-chain $S_n\in\finiteFlatChains1(\Omega)$ with $\mass{\partial(\bar\chain_n+S_n)-\iota_0^{-1}(\mu_--\mu_+)}+\mass{S_n}\leq\frac2n$.
Letting $\bar\mu_+^n,\bar\mu_-^n\in\measp(\Omega)$ be the positive and the negative part of $\iota_0(\partial(\bar\chain_n+S_n))$, we can summarize
\begin{multline*}
d_{\mathbb M_h}(\mu_+,\mu_-)
\geq d_{\mathbb M_h}(\mu_+^n,\mu_-^n)-\tfrac1n
\geq\hMass{\chain_n}-\tfrac2n
\geq\hMass{\bar\chain_n}-\tfrac3n
\geq\hMass{\bar\chain_n}+h'(0)\mass{S_n}-\tfrac{3+2h'(0)}n\\
\geq\hMass{\bar\chain_n+S_n}-\tfrac{3+2h'(0)}n
=\brTptCost{\iota_1^{-1}(\bar\chain_n+S_n)}-\tfrac{3+2h'(0)}n
\geq d_{\mathbb J_h}(\bar\mu_+^n,\bar\mu_-^n)-\tfrac{3+2h'(0)}n\,.
\end{multline*}
For $n\to\infty$ we obtain the desired inequality
if we can show $\lim_{n\to\infty}d_{\mathbb J_h}(\bar\mu_+^n,\bar\mu_-^n)=d_{\mathbb J_h}(\mu_+,\mu_-)$.
Note that $\bar\mu_\pm^n\to\mu_\pm$ strongly and let $\hat\mu_+^n,\hat\mu_-^n\in\measp(\Omega)$ be the positive and the negative part of $\mu_+-\bar\mu_+^n-\mu_-+\bar\mu_-^n$.
Then $d_{\mathbb J_h}(\hat\mu_+^n,\hat\mu_-^n)\leq h'(0)W_1(\hat\mu_+^n,\hat\mu_-^n)\leq h'(0)\mathrm{diam}(\Omega)\|\hat\mu_+^n\|_\meas\to0$ as $n\to\infty$,
where $W_1$ denotes the Wasserstein-$1$ distance and $\mathrm{diam}(\Omega)$ denotes the intrinsic diameter of $\Omega$ (the largest geodesic distance between two points in $\Omega$).
Thus, taking the limit $n\to\infty$ in
\begin{equation*}
d_{\mathbb J_h}(\mu_+,\mu_-)
\leq d_{\mathbb J_h}(\bar\mu_+^n,\bar\mu_-^n)+d_{\mathbb J_h}(\hat\mu_+^n,\hat\mu_-^n)
\end{equation*}
yields the desired result.

Now consider the case $h'(0)=\infty$.
For any $N\in\N$ define $h^N$ to be the lower semi-continuous subadditive envelope of $m\mapsto\min\{h(m),Nm\}$.
Using $(h^N)'(0)<\infty$ we thus obtain
\begin{equation*}
d_{\mathbb M_h}(\mu_+,\mu_-)
\geq d_{\mathbb M_{h^N}}(\mu_+,\mu_-)
=d_{\mathbb J_{h^N}}(\mu_+,\mu_-)\,.
\end{equation*}
Let further $\flux_N\in\massFluxes(\Omega)$ denote a minimizer for the right-hand side so that via \cref{thm:relaxationFixedBoundary} we have
\begin{equation*}
d_{\mathbb M_h}(\mu_+,\mu_-)
\geq d_{\mathbb J_{h^N}}(\mu_+,\mu_-)
=\brTptCost[h^N]{\flux_N}
=\brTptMu[h^N]{\flux_N}
=\lim_{n\to\infty}\brTptCost[h^N]{\flux_N^n}\,,
\end{equation*}
where $\flux_N^n$ is a polyhedral flux with $\partial\flux_N^n=\mu_-^n-\mu_+^n$.
By \cite[Lem.\,2.5]{BrWi18} we can reduce the right-hand side even further by replacing $\flux_N^n$ with an acyclic flux $\tilde\flux_N^n$ of the same boundary,
which by \cite[Lem.\,2.9]{BrWi18} has multiplicity bounded by $M$.
Since by \cite[Thm.\,5 and its proof]{La62} we have $h^N(m)\geq\alpha_Nm\geq\alpha_1m$ for all $m\in[0,M]$ with
\begin{equation*}
\alpha_N=\inf\left\{\tfrac{h^N(m)}m\,\middle|\,m\in\left(\tfrac M2,M\right]\right\}>0\,,
\end{equation*}
we see $d_{\mathbb M_h}(\mu_+,\mu_-)\geq\lim_{n\to\infty}\brTptCost[h^N]{\tilde\flux_N^n}\geq\lim_{n\to\infty}\alpha_1\|\tilde\flux_N^n\|_\meas$.
Hence, the $\tilde\flux_N^n$ have equibounded mass and converge weakly as mass fluxes (up to a subsequence) to some $\tilde\flux_N$ with $\|\tilde\flux_N\|_\meas\leq d_{\mathbb M_h}(\mu_+,\mu_-)/\alpha_1$.
Summarizing, we obtain
\begin{equation*}
d_{\mathbb M_h}(\mu_+,\mu_-)
\geq\lim_{n\to\infty}\brTptCost[h^N]{\tilde\flux_N^n}
\geq\brTptCost[h^N]{\tilde\flux_N}
\end{equation*}
for all $N\in\N$, where $\tilde\flux_N$ with $\partial\tilde\flux_N=\mu_+-\mu_-$ has mass bounded by $d_{\mathbb M_h}(\mu_+,\mu_-)/\alpha_1$.
Again restricting to a subsequence (still indexed by $N$) we have $\tilde\flux_N\massfluxto\flux$ for some $\flux\in\massFluxes(\Omega)$ and thus
\begin{equation*}
d_{\mathbb M_h}(\mu_+,\mu_-)
\geq\lim_{N\to\infty}\brTptCost[h^N]{\tilde\flux_N}
\geq\liminf_{N\to\infty}\brTptCost[h^L]{\tilde\flux_N}
\geq\brTptCost[h^L]{\flux}
=\hMass[h^L]{\iota_1^{-1}(\flux)}
\end{equation*}
for any $L\in\N$.
Using the representation \cref{thm:representation} and the Monotone Convergence Theorem as $L\to\infty$ we arrive at
\begin{equation*}
d_{\mathbb M_h}(\mu_+,\mu_-)
\geq\hMass{\iota_1^{-1}(\flux)}
=\brTptCost\flux
\geq d_{\mathbb J_h}(\mu_+,\mu_-)
\end{equation*}
as desired.

\end{proof}

\section*{Acknowledgements}
B.W.'s research was supported by the Alfried Krupp Prize for Young University Teachers awarded by the Alfried Krupp von Bohlen und Halbach-Stiftung.
A.M.\ acknowledges partial support from the section GNAMPA of INdAM.

\bibliographystyle{plain}
\bibliography{notes}

\end{document}